\newtheorem{thm}{Theorem}[section]
\newtheorem{prop}{Proposition}[section]
\newtheorem{lem}{Lemma}[section]
\newtheorem{cor}{Corollary}[section]
\title{The lower bound of the Ricci curvature that yields the infinite number of the discrete spectrum of the Laplacian}
\author{Hironori Kumura}
\date{}
\begin{document}

\maketitle

\begin{abstract}
This paper discusses the question whether the discrete spectrum of the Laplace-Beltrami operator is infinite or finite. 
The borderline-behavior of the curvatures for this problem will be completely determined. 
\end{abstract}
\section{Introduction}

The Laplace-Beltrami operator $\Delta$ on a noncompact complete Riemannian manifold $(M,g)$ is essentially self-adjoit on $C^{\infty}_0(M)$ and its self-adjoit extension to $L^2(M)$ has been studied by several authors from various points of view. 
In many cases, the bottom of the essential spectrum of $-\Delta$ will be positive (see Brooks \cite{B}), and the discrete spectrum will appear below this bottom number. 
The purpose of this paper is to determine the borderline-behavior of curvatures for the question whether the Laplace-Beltrami operator $- \Delta $ has a finite or infinite number of the discrete spectrum. 
The Rellich's lemma (see, for example, M.~Taylor \cite{T} ) suggests that this problem depends on the geometry of manifolds {\it at infinity}. 
In the case of Schr\"odinger operators $-\Delta +V$ on the Euclidean space ${\bf R}^n$, the borderline-behavior $-\frac{(n-2)^2}{4r^2}$ of the potential $V$ is determined by the {\it uncertainty principle lemma} 
$-\Delta \ge \frac{(n-2)^2}{4r^2}$ (see Reed-Simon \cite{R-S II} pp. 169 and Kirsh-S \cite{K-S} ), which is equivalent to the Hardy's inequality $-\frac{d^2u}{dx^2}\ge \frac{1}{4x^2}$ for $u\in C_0^{\infty}(0,\infty)$ (see, for example, \cite{A-K}). 
Our proof will be concerned with this borderline-behavior of the Hardy's inequality (see Proposition $2.1$ in section $2$). 

Main theorems of this paper is the following:
%
%
\begin{thm}
Let $(M,g)$ be an $n$-dimensional noncompact complete Riemannian manifold and $W$ a relatively compact open subset of $M$ with $C^{\infty}$-boundary $\partial W$. 
We set $r(*):={\rm dist}(*,\partial W)$ on $M\backslash W$. 
Let $\exp_{\partial W}: \mathcal{N}^+(\partial W) \rightarrow M\backslash W$ be the outward normal exponential map and ${\mathcal Cut}(\partial W)$ the corresponding cut locus of $\partial W$ in $M \backslash W$, where 
\begin{align*} 
  \mathcal{N}^+(\partial W) 
:= 
  \big\{ v \in TM|_{\partial W} \mid v \ {\rm is\ outward\ normal\ to}\ \partial W \big\}.
\end{align*}
Assume that  
\begin{align*}
 \min \sigma_{{\rm ess}} (-\Delta ) = \frac{(n-1)^2 \kappa }{4}
\end{align*}
for some constant $\kappa >0$ and that there exist positive constants $R_0$ and $\beta$, satisfying $\beta > \frac{1}{(n-1)^2}$, such that
\begin{align*}
 & {\rm Ric}_{g}\left( \nabla r, \nabla r \right) (y)
   \ge 
   (n-1)\left( - \kappa + \frac{\beta}{r(y)^2} \right) \\
 & \hspace{30mm}\mbox{for}~y \in M \backslash 
   \left( W \cup {\mathcal Cut}(\partial W) \right)~\mbox{with}~ r(y) \ge R_0,
\end{align*}
where ${\rm Ric}_{g}$ and $\nabla r$ respectively stand for the Ricci curvature of $(M,g)$ and the gradient of the function $r$. 
Then, the set
\begin{align*}
 \sigma_{\rm disc}( -\Delta ) \cap \left[ 0, \frac{(n-1)^2 \kappa }{4} \right)
\end{align*}
is infinite, where $\sigma_{\rm disc}( -\Delta )$ stands for the discrete spectrum of $-\Delta$. 
\end{thm}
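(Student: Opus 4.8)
The plan is to produce infinitely many orthogonal test functions, each supported in a far-away annular region $\{R_k \le r \le R_{k+1}\}$, whose Rayleigh quotient falls strictly below $\tfrac{(n-1)^2\kappa}{4} = \min\sigma_{\rm ess}(-\Delta)$; by the min–max principle this forces $\sigma_{\rm disc}(-\Delta)\cap[0,\tfrac{(n-1)^2\kappa}{4})$ to be infinite. The natural ansatz is a radial function of the form $\phi_k(y) = f_k(r(y))$, where $f_k\in C_0^\infty(R_k,R_{k+1})$ is to be chosen. Disjointness of supports gives orthogonality in $L^2(M)$ and also makes the Rayleigh quotients independent of one another, so it suffices to push a single such quotient below the critical value on an annulus that may be taken arbitrarily far out.

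The key estimate is a lower bound on the Laplacian of $r$ coming from the curvature hypothesis. On $M\backslash(W\cup\mathcal{Cut}(\partial W))$ the Bochner/Riccati comparison for the radial Laplacian, together with ${\rm Ric}_g(\nabla r,\nabla r)\ge (n-1)(-\kappa+\beta/r^2)$, yields an inequality of the form $\Delta r \le (n-1)\big(\sqrt{\kappa}\coth(\sqrt{\kappa}\,r) - \tfrac{c}{r} + o(1/r)\big)$ for an appropriate constant $c$ built from $\beta$ — in particular $\Delta r \le (n-1)\sqrt{\kappa} + (\text{negative term of order }1/r)$ for large $r$. Writing the Rayleigh quotient of $\phi_k=f_k(r)$ via the coarea formula, $\int_M|\nabla\phi_k|^2 = \int f_k'(r)^2\,dV$ and $\int_M|\phi_k|^2=\int f_k(r)^2\,dV$, an integration by parts using $\Delta r$ transfers into a one-dimensional Rayleigh-type quotient. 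After the standard substitution $f_k(r)=e^{-\frac{(n-1)\sqrt\kappa}{2}r}\,u_k(r)$ (which removes the essential-spectrum bottom), the problem reduces precisely to whether the one-dimensional operator $-u'' - \big(\text{potential} \ge \tfrac{\gamma}{r^2}\big)u$ with $\gamma>\tfrac14$ has arbitrarily negative eigenvalues on far-out intervals — and it does, by the borderline behavior of Hardy's inequality invoked in the introduction (Proposition 2.1). The hypothesis $\beta>\tfrac{1}{(n-1)^2}$ is exactly what makes the effective coefficient $\gamma$ exceed $\tfrac14$.

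Concretely, the steps are: (1) derive the comparison inequality for $\Delta r$ from the Ricci lower bound, handling the cut locus by the usual device that $r$ is smooth off $\mathcal{Cut}(\partial W)$ and that the distributional $\Delta r$ satisfies the comparison inequality globally (so the integration by parts goes through, or alternatively one restricts test functions to the star-shaped region reached before the cut locus); (2) for a test function $\phi=f(r)$ compute $\int|\nabla\phi|^2 - \tfrac{(n-1)^2\kappa}{4}\int|\phi|^2$ and integrate by parts to express it, up to the substitution above, as $\int_{R_k}^{R_{k+1}}\big(u'^2 - \tfrac{\gamma}{r^2}u^2\big)\,w(r)\,dr + (\text{llower-order})$ for a positive weight $w$; (3) invoke the failure of Hardy's inequality with constant $>\tfrac14$ on $(R,\infty)$ to find, for every $R$, a function $u$ making this negative, hence $\phi$ with Rayleigh quotient $<\tfrac{(n-1)^2\kappa}{4}$; (4) iterate on a sequence $R_1<R_2<\cdots\to\infty$ to get infinitely many orthogonal such test functions and conclude by min–max.

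The main obstacle I expect is technical rather than conceptual: controlling the error terms when passing from the curvature inequality to a clean one-dimensional model. The Riccati comparison gives $\Delta r$ bounded above by a quantity that is only asymptotically $(n-1)\sqrt\kappa + (n-1)(\tfrac12-\sqrt{\tfrac14+\beta(n-1)^2}\,)\tfrac1r$ — wait, more carefully, one must verify that the solution of the Riccati inequality $\varphi' + \tfrac{\varphi^2}{n-1}\le -\big(-\kappa+\tfrac{\beta}{r^2}\big)(n-1)$ produces an effective $1/r^2$-coefficient in the reduced quotient that genuinely exceeds $\tfrac14$ under $\beta>\tfrac1{(n-1)^2}$, and that the remaining terms (from $\coth$ minus its asymptote, from the substitution, from $f$ not being exactly the model eigenfunction) are $o(1/r^2)$ or otherwise absorbable on a sufficiently far annulus. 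Making this absorption uniform enough to run the induction — rather than just for one annulus — is where the care is needed, and it is presumably why Proposition 2.1 is isolated as a separate statement to be quoted as a black box here.
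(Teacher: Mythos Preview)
Your overall architecture—radial test functions, Laplacian comparison from the Ricci lower bound, reduction to a one-dimensional Hardy-type problem, min--max—is the right one, and it is also what the paper does. But there is a genuine gap in the annulus ansatz $f_k\in C_0^\infty(R_k,R_{k+1})$. The Ricci hypothesis gives only the \emph{upper} bound $\Delta r\le (n-1)S(r)$. After your substitution $f=e^{-\frac{(n-1)\sqrt\kappa}{2}r}u$ and one integration by parts, the quantity $\int_M(|\nabla f|^2-\tfrac{(n-1)^2\kappa}{4}f^2)\,dv_g$ becomes (along each normal geodesic) an integral of $u'^2-\dfrac{\gamma}{r^2}u^2$ against the weight $w(r,\xi)=e^{-(n-1)\sqrt\kappa\,r}\sqrt{g}(r,\xi)$. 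The comparison only tells you $(\log w)'\le O(1/r^2)$; nothing prevents $w$ from decaying like $e^{-Cr}$ with $C$ large. For such a weight the quadratic form $\int w\,u'^2$ dominates $c\int w\,u^2$ with $c>0$ fixed, so no compactly supported $u$ makes $\int w(u'^2-\tfrac{\gamma}{r^2}u^2)<0$ once $R_k$ is large, even though $\gamma>\tfrac14$. In short: a function supported in an annulus cannot be monotone, and without monotonicity the one-sided bound on $\Delta r$ is not enough to close the estimate. This is not an error-term issue; it is structural.

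The paper's remedy is precisely to arrange monotonicity. The Hardy/annulus computation is done not on $M$ but on a rotationally symmetric \emph{model} $M_{\rm model}=(\mathbf{R}^n,dr^2+J(r)^2g_{S^{n-1}})$ built from the comparison function, where the density $J^{n-1}$ is known; this yields (Proposition~2.2) that the first Dirichlet eigenvalue of the model \emph{ball} $B(2kR)_{M_{\rm model}}$ lies below $\tfrac{(n-1)^2\kappa}{4}$. Its radial first eigenfunction $h_1$ is then shown to satisfy $h_1'<0$ on $(0,2kR]$ (Lemma~2.1). One transplants $F_R:=h_1\circ r$ to $M$, extended by the constant $h_1(0)$ on $W$, so $F_R$ is supported on the ball $W\cup\overline{B(\partial W,R)}$. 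In the integration by parts (equation~(17)) the sign conditions $h_1>0$, $h_1'<0$ are exactly what allow one to drop the boundary term at $\rho(\xi)\wedge R$ and to replace $\Delta r$ by its upper bound $(n-1)S(r)$ with the inequality going the right way. The infinite-dimensional test space is then obtained by letting $R=R_i\to\infty$: the supports are nested, not disjoint, but the family $\{F_{R_i}\}$ is linearly independent because the supports strictly increase. So the two ideas you are missing are: (i) run the Hardy argument on the model, not on $M$; (ii) transplant a \emph{monotone} eigenfunction on a ball, not an annular bump.
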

Note that we do not assume that $M\backslash W$ is connected in Theorem $1.1$: hence, $\partial W$ may have several but finite number of components. 

Similarly, we get the following:
\begin{thm}
Let $(M, g)$ be an $n$-dimensional complete Riemannian manifold and $p_0$ be a point of $M$. 
We set $r(*):={\rm dist}(*,p_0)$ and denote by ${\mathcal Cut}(p_0)$ the cut locus of $p_0$. 
Assume that  
\begin{align*}
 \min \sigma_{{\rm ess}} (-\Delta ) = \frac{(n-1)^2 \kappa }{4}
\end{align*}
for some constant $\kappa >0$ and that there exist positive constants $R_0$ and $\beta$, satisfying $\beta > \frac{1}{(n-1)^2}$, such that
\begin{align*}
 & {\rm Ric}_{g}(\nabla r, \nabla r) (y)
   \ge 
   (n-1)\left( -\kappa + \frac{\beta}{r(y)^2} \right) \\
 & \hspace{30mm} \mbox{for}~y \in 
   M \backslash {\mathcal Cut}(p_0)~\mbox{with}~ r(y) \ge R_0.
\end{align*}
Then, the set
\begin{align*}
 \sigma_{\rm disc}( -\Delta ) \cap \left[ 0, \frac{(n-1)^2 \kappa }{4} \right)
\end{align*}
is infinite, where $\sigma_{\rm disc}( -\Delta )$ stands for the discrete spectrum of $-\Delta$. 
\end{thm}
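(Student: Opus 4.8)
The plan is to reduce Theorem 1.2 to Theorem 1.1 by a suitable choice of the compact set $W$, so that the normal exponential map from $\partial W$ essentially recovers the geodesic sphere picture around $p_0$. First I would fix a small geodesic ball $B_\rho(p_0)$ whose radius $\rho$ is smaller than the injectivity radius at $p_0$, so that $\partial B_\rho(p_0)$ is a smooth hypersurface and the outward normal exponential map $\exp_{\partial B_\rho(p_0)}$ coincides with the restriction of $\exp_{p_0}$ to the annular region; in particular $r_{p_0}(y) = \rho + \mathrm{dist}(y, \partial B_\rho(p_0))$ on $M \backslash B_\rho(p_0)$, and the cut locus $\mathcal{Cut}(\partial B_\rho(p_0))$ in $M \backslash B_\rho(p_0)$ agrees with $\mathcal{Cut}(p_0)$ outside this ball. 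Set $W := B_\rho(p_0)$ and $\tilde r(*) := \mathrm{dist}(*, \partial W)$, so $\tilde r = r_{p_0} - \rho$ on $M \backslash W$ and $\nabla \tilde r = \nabla r_{p_0}$ there.

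Next I would translate the curvature hypothesis. On $M \backslash (W \cup \mathcal{Cut}(\partial W))$ with $\tilde r(y) \ge R_0$, i.e. with $r_{p_0}(y) \ge R_0 + \rho$, the assumption gives
\begin{align*}
 \mathrm{Ric}_g(\nabla \tilde r, \nabla \tilde r)(y)
 = \mathrm{Ric}_g(\nabla r_{p_0}, \nabla r_{p_0})(y)
 \ge (n-1)\Bigl( -\kappa + \frac{\beta}{r_{p_0}(y)^2} \Bigr)
 = (n-1)\Bigl( -\kappa + \frac{\beta}{(\tilde r(y)+\rho)^2} \Bigr).
\end{align*}
To match the form required by Theorem 1.1 I need a lower bound of the shape $(n-1)(-\kappa + \beta'/\tilde r(y)^2)$ with $\beta' > \frac{1}{(n-1)^2}$. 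Since $\beta > \frac{1}{(n-1)^2}$ strictly, I pick $\beta'$ with $\frac{1}{(n-1)^2} < \beta' < \beta$, and then choose $R_0'$ large enough (depending on $\rho$, $\beta$, $\beta'$) so that $\frac{\beta}{(\tilde r + \rho)^2} \ge \frac{\beta'}{\tilde r^2}$ whenever $\tilde r \ge R_0'$; this is an elementary inequality, valid because $\bigl(\tilde r/(\tilde r+\rho)\bigr)^2 \to 1$. Thus with $R_0' := \max\{R_0, (\text{the threshold above})\}$ the hypotheses of Theorem 1.1 hold for $(M,g)$, $W = B_\rho(p_0)$, the constant $\kappa$, and the constants $R_0'$, $\beta'$. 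Since $\min \sigma_{\mathrm{ess}}(-\Delta) = \frac{(n-1)^2\kappa}{4}$ is assumed identically, Theorem 1.1 applies and yields that $\sigma_{\mathrm{disc}}(-\Delta) \cap [0, \frac{(n-1)^2\kappa}{4})$ is infinite, which is exactly the conclusion of Theorem 1.2.

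The only delicate point is the identification of cut loci: I must check that $\mathcal{Cut}(\partial B_\rho(p_0))$ inside $M \backslash B_\rho(p_0)$ coincides, as a subset of $M \backslash B_\rho(p_0)$, with $\mathcal{Cut}(p_0) \cap (M \backslash B_\rho(p_0))$, so that the pointwise curvature bound is being assumed on the same set in both formulations. This follows from the fact that, for $\rho$ below the injectivity radius at $p_0$, the outward unit normal field along $\partial B_\rho(p_0)$ is exactly $\nabla r_{p_0}$, and a unit-speed geodesic leaving $\partial B_\rho(p_0)$ normally is a reparametrized radial geodesic from $p_0$; hence it minimizes distance to $\partial B_\rho(p_0)$ for precisely as long as its extension minimizes distance to $p_0$, and the first focal/cut behavior is the same. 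I would record this as a short lemma (or cite the standard comparison of focal loci of concentric spheres) and otherwise the argument is a direct reduction. Should the injectivity radius at $p_0$ be awkward to invoke, an alternative is to bypass $W$ altogether and rerun the proof of Theorem 1.1 verbatim with $\partial W$ replaced by the point $p_0$, since the test-function construction there only uses the radial coordinate $r$ and the Laplacian comparison $\Delta r \le \cdots$ coming from the Ricci bound, both of which are available from $p_0$ directly; but the reduction above is cleaner.
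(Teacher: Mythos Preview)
Your proposal is correct and takes essentially the same approach as the paper: the paper reduces Theorem~1.2 to Theorem~1.1 in one line by taking $W = \{y \in M \mid \mathrm{dist}(y,p_0) < \varepsilon\}$ for $0 < \varepsilon < \min\{\mathrm{inj}(p_0), R_0\}$. Your treatment is in fact more careful than the paper's, since you explicitly handle the shift $\tilde r = r_{p_0} - \rho$ by passing to a slightly smaller $\beta' \in \bigl(\tfrac{1}{(n-1)^2}, \beta\bigr)$ and a larger threshold $R_0'$, and you justify the identification of $\mathcal{Cut}(\partial B_\rho(p_0))$ with $\mathcal{Cut}(p_0)$; the paper leaves these points implicit.
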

Although the topological property of manifolds is reflected in that of the cut locus, the theorem above does {\it not concern the property of the cut locus at all} but only the {\it Ricci curvatures of the radial direction on the complement of the cut locus}. 

The following proposition shows that the curvature assumption in Theorem $1.1$ and $1.2$ are sharp:
\begin{prop}
Let $\left({\bf R}^n, dr^2 + h^2(r) g_{S^{n-1}(1)} \right)$ be a rotationally symmetric Riemannian manifold and assume that the radial curvature $K(r) = -\frac{h''(r)}{h(r)}$ satisfies
\begin{align*}
  K(r) \le 0  \qquad \mbox{for~all}~~ r\ge 0
\end{align*}
and there exists constants $\kappa >0$, $R_0 >0 $ and $\beta \neq \frac{1}{(n-1)^2}$ such that
\begin{align*}
  K(r) = - \kappa + \frac{\beta}{r^2} \qquad \mbox{for}~~ r \ge R_0.
\end{align*}
Then, $\sigma_{{\rm ess}}(-\Delta ) = \left[\frac{(n-1)^2\kappa}{4},\infty \right)$, and furthermore, $\sigma_{\rm disc}( -\Delta ) \cap \left[ 0, \frac{(n-1)^2 \kappa }{4} \right)$ is infinite if and only if $\beta > \frac{1}{(n-1)^2}$. 
\end{prop}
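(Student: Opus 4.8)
The plan is to reduce everything to a one-dimensional (Sturm–Liouville) problem, since on a rotationally symmetric manifold $({\bf R}^n, dr^2 + h^2(r)g_{S^{n-1}})$ the Laplacian decomposes over spherical harmonics. First I would solve the ODE $h'' = \bigl(\kappa - \beta/r^2\bigr)h$ for $r \ge R_0$: its solutions behave like $r^{\alpha}e^{\sqrt{\kappa}\,r}$ and $r^{\alpha}e^{-\sqrt{\kappa}\,r}$ with exponent $\alpha$ depending on $\beta$; since $K \le 0$ everywhere forces $h$ to be convex and increasing, $h(r)$ grows like $r^{\alpha}e^{\sqrt{\kappa}\,r}$ as $r \to \infty$. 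From this asymptotic one reads off that $h'/h \to \sqrt{\kappa}$, and then a standard computation (decomposition of $-\Delta$ acting on radial functions, conjugation by $h^{(n-1)/2}$) gives $\min\sigma_{\rm ess}(-\Delta) = (n-1)^2\kappa/4$ and $\sigma_{\rm ess}(-\Delta) = [(n-1)^2\kappa/4, \infty)$; this part is essentially the Donnelly/Kumura-type decay estimate and I would invoke it with the explicit $h$.

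The heart of the matter is the dichotomy in $\beta$. Write $\lambda_0 := (n-1)^2\kappa/4$. The restriction of $-\Delta$ to radial functions is unitarily equivalent, via $u \mapsto h^{(n-1)/2}u$, to the operator $L := -\frac{d^2}{dr^2} + q(r)$ on $L^2(0,\infty)$ (with appropriate boundary condition at $0$), where
\begin{align*}
  q(r) = \frac{(n-1)(n-3)}{4}\frac{(h')^2}{h^2} + \frac{n-1}{2}\frac{h''}{h}.
\end{align*}
Using $h''/h = \kappa - \beta/r^2$ and $h'/h = \sqrt{\kappa} + O(1/r)$ (more precisely $h'/h = \sqrt\kappa + \gamma/r + o(1/r)$ for a constant $\gamma$ determined by $\beta$), one computes that for large $r$
\begin{align*}
  q(r) - \lambda_0 = \frac{c(\beta)}{r^2} + o\!\left(\frac{1}{r^2}\right),
\end{align*}
and the key algebraic fact I would verify is that the coefficient $c(\beta)$ crosses the Hardy-critical value $-\tfrac14$ exactly when $\beta$ crosses $\tfrac{1}{(n-1)^2}$: namely $c(\beta) < -\tfrac14 \iff \beta > \tfrac{1}{(n-1)^2}$. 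Granting this, the infinitude of $\sigma_{\rm disc}(-\Delta) \cap [0,\lambda_0)$ follows from the classical oscillation criterion for $-u'' + q u = \lambda_0 u$: when the potential is $\le (\lambda_0 - \tfrac14 r^{-2})$ near infinity, say $q - \lambda_0 \le -(\tfrac14+\varepsilon)r^{-2}$, solutions oscillate, so $L$ has infinitely many eigenvalues below $\lambda_0$; conversely when $c(\beta) > -\tfrac14$ one gets $q - \lambda_0 \ge -(\tfrac14-\varepsilon)r^{-2}$ near infinity, non-oscillation holds (this is exactly the borderline of Hardy's inequality alluded to in the introduction, i.e. Proposition 2.1), and combined with the fact that the other spherical-harmonic sectors have strictly larger effective potentials, only finitely many eigenvalues lie below $\lambda_0$.

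The main obstacle I anticipate is pinning down $c(\beta)$ sharply — this requires the $1/r$-order term in the expansion of $h'/h$, not just the leading constant $\sqrt\kappa$, because that term feeds into the $(h')^2/h^2$ piece of $q$ at order $1/r^2$. So I would carefully solve the Riccati equation $v' + v^2 = \kappa - \beta/r^2$ (with $v = h'/h$) to two orders: $v = \sqrt\kappa + \tfrac{\gamma}{r} + o(1/r)$ forces $2\sqrt\kappa\,\gamma = -\beta$ at the $1/r$ level... but one must be careful: the term $-\beta/r^2$ is itself order $1/r^2$, so matching at order $1/r$ gives only $-\gamma/r^2$ from $v'$ against $\gamma^2/r^2$-type and $2\sqrt\kappa\gamma/r$ — actually the $1/r$ coefficient of $v$ must vanish and the correction enters at order $1/r^2$; I would redo this bookkeeping as $v = \sqrt\kappa + a_2/r^2 + \cdots$ is inconsistent with a $\beta/r^2$ source unless one allows $a_1/r$, so the precise ansatz is $v = \sqrt\kappa + a_1/r + \cdots$ with $2\sqrt\kappa a_1 = -\beta/0$, meaning no $1/r$ term and instead $v = \sqrt\kappa - \tfrac{\beta}{2\sqrt\kappa}\cdot\tfrac1r + O(r^{-2})$ coming from balancing $v' \sim -a_1/r^2$ against... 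I will resolve this delicate balance directly from the linear ODE for $h$ using the known confluent-hypergeometric (or power-times-exponential) solutions rather than the Riccati equation, which sidesteps the bookkeeping ambiguity, and then substitute into $q$ to extract $c(\beta)$ cleanly. Once $c(\beta)$ is in hand, the oscillation/non-oscillation dichotomy and the reduction to Proposition 2.1 are routine.
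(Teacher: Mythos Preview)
Your reduction to the one-dimensional Schr\"odinger operator and appeal to the Kneser-type oscillation criterion is correct and is a genuinely different route from the paper's. The paper does not perform the spherical-harmonic decomposition at all: for the infinitude direction it simply invokes Theorem~1.1 (the general Ricci-lower-bound result, proved by transplanting the model-space eigenfunction), and for the finiteness direction it quotes a geometric Hardy inequality from \cite{A-K} (Theorem~4.1 here), computes
\[
\frac{1}{4r^2}+\frac{1}{4}(\Delta r)^2-\frac{1}{2}|\nabla dr|^2-\frac{1}{2}\,{\rm Ric}_g(\nabla r,\nabla r)=\frac{(n-1)^2\kappa}{4}+\frac{1-(n-1)^2\beta}{4r^2}+O(r^{-3}),
\]
and then uses Neumann bracketing on $B_0(R_1)\cup({\bf R}^n\setminus B_0(R_1))$ to bound the eigenvalue count. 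Your approach is lighter and self-contained for the rotationally symmetric case, while the paper's argument is designed so that both directions are instances of its general manifold theorems; the underlying computation is the same --- your $c(\beta)$ and the paper's coefficient $\tfrac14(1-(n-1)^2\beta)$ differ exactly by the Hardy term $\tfrac{1}{4r^2}$.

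One concrete correction to your sketch: the Riccati expansion has \emph{no} $1/r$ term. Writing $v=h'/h$ and $v=\sqrt{\kappa}+a_1r^{-1}+a_2r^{-2}+\cdots$, the $r^{-1}$ level of $v'+v^2=\kappa-\beta r^{-2}$ forces $2\sqrt{\kappa}\,a_1=0$, and then the $r^{-2}$ level gives $a_2=-\beta/(2\sqrt{\kappa})$; thus $v=\sqrt{\kappa}-\tfrac{\beta}{2\sqrt{\kappa}}r^{-2}+O(r^{-3})$ (this is exactly (4) in the paper). Plugging into your $q$ yields $q(r)-\lambda_0=-\tfrac{(n-1)^2\beta}{4}r^{-2}+O(r^{-3})$, so $c(\beta)=-\tfrac{(n-1)^2\beta}{4}$, and $c(\beta)<-\tfrac14\iff\beta>1/(n-1)^2$ as you claimed. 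With this asymptotic in hand there is no need to detour through the confluent-hypergeometric solutions.
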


Indeed, under the assumptions in Proposition $1.1$, 
${\rm Ric}_{g}(\nabla r, \nabla r) = (n-1) K(r) = (n-1)\left( - \kappa + \frac{\beta}{r^2} \right)$, and hence, the lower bound of the Ricci curvature in Theorem $1.1$ and $1.2$ are sharp. 
That is, the borderline-behavior of curvatures for our problem can be said to be $-\kappa + \frac{1}{\{(n-1)r\}^2}$. 
See also \cite{A-K} Theorem $3.1$ for the finiteness-result on not necessarily rotationally symmetric manifolds. 

\section{Construction of a model space and eigenfunction}

In this section, we shall construct a model space and study the property of 
an eigenfunction, which will be transplanted on $M$ to prove Theorem $1.1$. 

Let $R_{\min} :[0,\infty) \to (-\infty, 0]$ be a nonpositive-valued continuous function satisfying
\begin{align*}
  {\rm Ric}_g \, (\nabla r, \nabla r) (x)
 \ge 
  (n-1) R_{\min} \left(r(x)\right)\quad \mbox{for}~~
  x\in M \backslash {\mathcal Cut}(p_0)
\end{align*}
and 
\begin{align}
  R_{\min} (r)= - \kappa + \frac{\beta}{r^2} \quad {\rm for}~~r \ge R_1,
\end{align}
where $\kappa >0$ and $R_1>R_0$ are constants. 

Using this function $R_{\min}(t)$, consider the solution $J(t)$ to the following classical Jacobi equation:
\begin{align*}
  J''(t) + R_{\min} (t) J(t) = 0;~~J(0)=0;~~J'(0)=1
\end{align*}
and set
$$
   S(t)=\frac{J'(t)}{J(t)}.
$$

Using this function $J$, let us consider a model space:
\begin{align*}
  M_{{\rm model}} := ({\bf R}^n , dr^2 + J(r)^2 g_{S^{n-1}(1)}),
\end{align*}
where $r$ is the Euclidean distance to the origin and $g_{S^{n-1}(1)}$ stands for the standard metric on the unit sphere $S^{n-1}(1)$. 

Since $\lim_{t \to +0}S(t)=\infty$, the Laplacian comparison theorem (see Kasue \cite{Ka}) implies that
\begin{align}
  \Delta r = \Delta_{(M,g)} \, r \le (n-1)S(r) 
  \qquad {\rm on}\quad M\backslash (W \cup {\mathcal Cut}(\partial W)). 
\end{align}
This inequality $(2)$ is known to hold on $M\backslash W$ in the sense of distribution. 
Note that $J(t) \ge t >0$ due to the non-positivity of $R_{\min} $, and hence, $S(t)=\frac{J'(t)}{J(t)}$ exists for all $t \in (0,\infty)$. 

Since $S(t) = \frac{J'(t)}{J(t)}$ satisfies the Riccati equation
\begin{align}
  S'(t) + S^2(t) + R_{\min} (t) = 0
\end{align}
and $R_{\min} (t)$ satisfies $(1)$, it is not hard to see that the solution $S(t)$ to this equation $(3)$ has the asymptotic behavior
\begin{align}
  S(t) = \sqrt{\kappa}-\frac{\beta}{2\sqrt{\kappa}\,t^2} + O \left( \frac{1}{t^3} \right).
\end{align}

The following proposition, which also plays an important role in {\rm \cite{A-K}}, serves to construct an eigenfunction on our model space $M_{{\rm model}}$: 

\begin{prop}
For any $R>0$ and $\delta >0$, consider the following eigenvalue problem $(*)$$:$
\begin{align*}
(*)
\begin{cases} 
   - \varphi ''(x) - (1 + \delta )\displaystyle \frac{1}{4 x^2} \varphi (x)
    = \lambda \, \varphi (x)\quad {\rm on}~~[R, 2kR];\\
   \varphi (R) = \varphi (2kR) = 0.
\end{cases}
\end{align*}
Then, the first eigenvalue $ - \lambda _1 = - \lambda _1 (\delta, R, k)$ of this problem $(*)$ is negative, if $k > 2 \left\{ \exp \left( \frac{12}{\delta} \right) \wedge 1 \right\}$. 
Here, we write $\exp \left( \frac{12}{\delta} \right) \wedge 1 = \min \left\{\exp \left( \frac{12}{\delta} \right),1 \right\}$. 
\end{prop}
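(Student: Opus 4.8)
The plan is to exhibit an explicit test function $\varphi$ supported on $[R, 2kR]$ with negative Rayleigh quotient for the quadratic form associated with $(*)$, namely
\begin{align*}
  Q(\varphi) = \int_R^{2kR} \left( |\varphi'(x)|^2 - (1+\delta)\frac{1}{4x^2}|\varphi(x)|^2 \right) dx.
\end{align*}
Showing $Q(\varphi) < 0$ for some admissible $\varphi$ immediately gives $\lambda_1 > 0$, i.e. the first eigenvalue $-\lambda_1$ of the operator is negative. The natural candidate, dictated by the borderline behavior of Hardy's inequality, is a piecewise function built from the critical power $x^{1/2}$: on the middle range take $\varphi(x) = \sqrt{x}$, and then interpolate down to zero near the two endpoints $x=R$ and $x=2kR$ so as to meet the Dirichlet conditions. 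Since $\sqrt{x}$ exactly saturates $-\varphi'' = \frac{1}{4x^2}\varphi$, the extra factor $\delta$ in the potential is what produces the negative contribution, while the corrections coming from the cutoff near the endpoints must be controlled.

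Concretely, I would set $\varphi(x) = \sqrt{x}$ for $x \in [2R, kR]$, let $\varphi$ be linear (or of the form $c\sqrt{x}\log$-type) on $[R,2R]$ joining $0$ at $R$ to $\sqrt{2R}$ at $2R$, and symmetrically logarithmic-cutoff $\varphi(x) = \sqrt{x}\,\frac{\log(2kR/x)}{\log 2}$ on $[kR, 2kR]$ joining $\sqrt{kR}$ down to $0$ at $2kR$. The point of the logarithmic profile near the outer endpoint is that for the function $\sqrt{x}\,\ell(x)$ one computes $|(\sqrt{x}\ell)'|^2 - \frac{1}{4x^2}|\sqrt{x}\ell|^2 = x|\ell'(x)|^2 + (\text{total derivative})$, so the "dangerous" part of the Hardy potential is cancelled and only the genuinely positive but small term $\int x|\ell'|^2$ remains; with $\ell(x) = \log(2kR/x)/\log 2$ this integral is of order $1/\log k$ times $kR$-scale quantities. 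On the bulk $[2R,kR]$ the integrand is $-\frac{\delta}{4x^2}\cdot x = -\frac{\delta}{4x}$, whose integral is $-\frac{\delta}{4}\log(k/2)$. Adding the bounded contribution from $[R,2R]$ (which is $O(1)$, independent of $k$), one gets
\begin{align*}
  Q(\varphi) \le -\frac{\delta}{4}\log\frac{k}{2} + \frac{C}{\log k} + C'
\end{align*}
for absolute constants $C, C'$, and this is negative once $k$ is large enough in terms of $\delta$; tracking the constants should yield precisely the stated threshold $k > 2\{\exp(12/\delta)\wedge 1\}$.

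The main obstacle I anticipate is the bookkeeping of the endpoint contributions, especially choosing the cutoff profiles so that (i) the boundary terms from integration by parts vanish at $R$ and $2kR$ (forced by the Dirichlet condition, so this is automatic if $\varphi$ is continuous and zero there), and (ii) the positive error terms are genuinely smaller than $\frac{\delta}{4}\log k$ with the clean constant $12$. One has to be a little careful that the interpolation on $[R,2R]$ does not itself inject a large positive term — using $\varphi(x)=\sqrt{x}\cdot\frac{\log(x/R)}{\log 2}$ there makes that piece also $O(1/\log 2)$ rather than growing, so the only $k$-dependent gain is the bulk term $-\frac{\delta}{4}\log(k/2)$, and comparing $\frac{\delta}{4}\log(k/2)$ against the fixed-size errors is what produces the exponential-in-$1/\delta$ lower bound on $k$. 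The rest is a routine but careful computation of four elementary integrals.
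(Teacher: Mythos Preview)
Your approach is essentially the paper's: set $\varphi(x)=\chi(x)\sqrt{x}$ for a cutoff $\chi$, use the identity $|\varphi'|^2-\frac{1}{4x^2}|\varphi|^2=x|\chi'|^2+\frac{1}{2}(\chi^2)'$, and balance the bulk gain $-\frac{\delta}{4}\log(k/2)$ against bounded endpoint errors. The paper takes $\chi$ piecewise \emph{linear} (not logarithmic), which gives exactly $\int x|\chi'|^2\,dx=3$ and hence the clean threshold $k>2e^{12/\delta}$; note that your logarithmic cutoff $\ell(x)=\log(2kR/x)/\log 2$ on $[kR,2kR]$ actually contributes $\int_{kR}^{2kR}x|\ell'|^2\,dx=1/\log 2=O(1)$, not $O(1/\log k)$ as you wrote, but this slip is harmless for the conclusion.
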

\begin{proof}
We set 
$$ 
\chi(x) := 
\begin{cases} 
\ \ \frac{1}{R}(x - R)\quad & {\rm if}\quad x \in [R, 2R], \\ 
\ \ 1\quad & {\rm if}\quad x \in [2R, kR], \\ 
\ \ - \frac{1}{kR}(x - 2kR)\quad & {\rm if}\quad x \in [kR, 2kR], \\ 
\end{cases} 
$$ 
where $k > 2$ is a large positive constant defined later. 
Set $\varphi(x) := \chi(x) x^{\frac{1}{2}}$. 
Then, the direct computation shows that 
$$ 
|\varphi'(x)|^2 - (1 + \delta) \displaystyle \frac{1}{4 x^2} |\varphi(x)|^2 
= |\chi'(x)|^2 x -  \frac{\delta}{4 x^2}  |\varphi(x)|^2 + 
\frac{1}{2} \left(\chi(x)^2\right)'. 
$$ 
Integrating the both sides over $[R,2kR]$, we have 
\begin{align*} 
&\int_R^{2kR} 
\left\{ |\varphi'|^2 - (1 + \delta )\frac{1}{4 x^2} |\varphi|^2 \right\} dx\\ 
= & \int_R^{2kR} |\chi'(x)|^2 x \, dx 
- \frac{\delta}{4} \int_R^{2kR} \frac{\chi^2(x)}{x} \, dx \\ 
\le & \frac{1}{R^2} \int_R^{2R} x \, dx + 
\frac{1}{(kR)^2} \int_{kR}^{2kR} x \, dx 
- \frac{\delta}{4} \int_{2R}^{kR} \frac{\chi^2(x)}{x} \, dx \\ 
= & 3 - \frac{\delta}{4} \log \left( \frac{k}{2} \right). 
\end{align*} 
Hence,  
$$ 
\int_R^{2kR} 
\left\{ |\varphi'|^2 - (1 + \delta )\frac{1}{4 x^2} |\varphi|^2 \right\} dx < 0
\qquad {\rm if}\quad k > 2 \left\{ \exp \left( \frac{12}{\delta} \right)\wedge 1\right\}. 
$$ 
Therefore, mini-max principle implies that the first eigenvalue of the problem $(*)$ is negative, if $k > 2 \left\{ \exp \left( \frac{12}{\delta} \right) \wedge 1 \right\}$.
\end{proof}

For $t>0$, we denote by $B(t)_{M_{\rm model}}$ the open ball of $M_{\rm model}$ centered at the origin $0$ with radius $t$, and by $\lambda _D \bigl( B(t)_{M_{\rm model}} \bigr)$  the first Dirichlet eigenvalue of $-\Delta_{M_{\rm model}}$ on $B(t)_{M_{\rm model}}$. 
Then, we have the following:
\begin{prop}
Assume that $\beta (n-1)^2 > 1$ and choose small constant $\delta >0$ so that $\beta (n-1)^2 > 1 + \delta$. 
For a fixed constant $k > 2 \left\{ \exp \left( \frac{12}{\delta} \right) \wedge 1 \right\}$, let $ - \lambda_1 = - \lambda_1(k, R, \delta)< 0$ be the first Dirichlet eigenvalue of the problem $(*)$. 
Then, there exists a positive constant $R_0(n,\beta,\kappa,\delta,R_{{\rm min}})$ such that 
\begin{align}
 \lambda _D \bigl( B(2kR)_{M_{\rm model}} \bigr)
 <
 \frac{(n-1)^2 \kappa}{4} - \lambda_1 
\end{align}
holds for any $R \ge R_0(n,\beta,\kappa,\delta, R_{{\rm min}})$. 
\end{prop}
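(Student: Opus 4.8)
The plan is to estimate the Dirichlet eigenvalue $\lambda_D\bigl(B(2kR)_{M_{\rm model}}\bigr)$ from above by a Rayleigh quotient built out of the eigenfunction $\varphi$ produced by Proposition~2.1, transplanted to the model space via the warped-product structure. Concretely, I would take the radial test function $u(r) = \varphi(r)\, J(r)^{-(n-1)/2}$ on the annulus $R \le r \le 2kR$, extended by something harmless (or by zero) on $B(R)_{M_{\rm model}}$, and feed it into the variational characterization $\lambda_D\bigl(B(2kR)_{M_{\rm model}}\bigr) \le \int |\nabla u|^2 \,/ \int u^2$. The point of the factor $J^{-(n-1)/2}$ is the standard unitary transformation that turns the radial part of $-\Delta_{M_{\rm model}}$ acting on $L^2\bigl(J(r)^{n-1}\,dr\bigr)$ into the one-dimensional Schrödinger operator $-\frac{d^2}{dr^2} + Q(r)$ on $L^2(dr)$, where $Q(r) = \frac{n-1}{2}S'(r) + \frac{(n-1)^2}{4}S(r)^2$; by the Riccati equation $(3)$ and the asymptotics $(4)$ this potential satisfies $Q(r) = \frac{(n-1)^2\kappa}{4} - \frac{\beta(n-1)^2 - 1}{4}\cdot\frac{1}{r^2} + O(r^{-3})$ as $r\to\infty$. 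Since we have chosen $\delta$ with $\beta(n-1)^2 > 1+\delta$, for $r$ large we get $Q(r) \le \frac{(n-1)^2\kappa}{4} - (1+\delta)\frac{1}{4r^2}$.

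With that reduction in hand, the Rayleigh quotient of $u$ on the annulus equals
\begin{align*}
 \frac{\displaystyle\int_R^{2kR}\left\{ |\varphi'(r)|^2 + Q(r)\,|\varphi(r)|^2 \right\}dr}{\displaystyle\int_R^{2kR} |\varphi(r)|^2\,dr}
 \le
 \frac{(n-1)^2\kappa}{4}
 +
 \frac{\displaystyle\int_R^{2kR}\left\{ |\varphi'(r)|^2 - (1+\delta)\frac{1}{4r^2}|\varphi(r)|^2 \right\}dr}{\displaystyle\int_R^{2kR} |\varphi(r)|^2\,dr},
\end{align*}
and by the defining eigenvalue equation in Proposition~2.1 together with $\varphi(R)=\varphi(2kR)=0$, the second term on the right is exactly $-\lambda_1(k,R,\delta) < 0$. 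So the \emph{annular} Rayleigh quotient already beats $\frac{(n-1)^2\kappa}{4} - \lambda_1$; the remaining issue is purely the extension of $u$ across the inner ball $B(R)$ and the fact that $B(2kR)$ is a ball, not an annulus.

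I would handle the inner ball either by extending $u$ by $0$ on $B(R)_{M_{\rm model}}$ (legitimate since $u(R)=\varphi(R)J(R)^{-(n-1)/2}=0$, so the extension lies in the right Sobolev space and contributes nothing to either integral), which makes the ball case reduce immediately to the annular computation above. The only genuine point of care is that the asymptotic estimate $Q(r)\le \frac{(n-1)^2\kappa}{4}-(1+\delta)\frac1{4r^2}$ requires $r$ to be at least some threshold depending on $n,\beta,\kappa,\delta$ and the error term in $(4)$ — i.e. on $R_{\min}$; this forces the hypothesis $R \ge R_0(n,\beta,\kappa,\delta,R_{\min})$, and one must also keep $R_1$ (where $R_{\min}$ becomes exactly $-\kappa+\beta/r^2$) below this threshold so that $(4)$ is actually available on the whole annulus $[R,2kR]$. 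The main obstacle, then, is not any single hard estimate but the bookkeeping: pinning down precisely how large $R$ must be so that the $O(r^{-3})$ remainder in $(4)$ is absorbed into the gap between $\beta(n-1)^2$ and $1+\delta$ uniformly on $[R,2kR]$, and checking that transplanting $\varphi$ through the $J^{-(n-1)/2}$ substitution produces no boundary terms (again guaranteed by $\varphi(R)=\varphi(2kR)=0$ and $J>0$ on $(0,\infty)$). Once the threshold $R_0(n,\beta,\kappa,\delta,R_{\min})$ is fixed this way, inequality $(5)$ follows for all $R\ge R_0$.
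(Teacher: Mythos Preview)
Your approach is essentially identical to the paper's: both transplant the first Dirichlet eigenfunction $\varphi_1$ of $(*)$ via $f=\varphi_1\,J^{-(n-1)/2}$, extend by zero across the inner ball, and compare the resulting radial Rayleigh quotient to $\frac{(n-1)^2\kappa}{4}-\lambda_1$ using the Riccati equation $(3)$ and the asymptotics $(4)$. One small slip to fix: the effective potential is actually $Q(r)=\frac{(n-1)^2\kappa}{4}-\frac{\beta(n-1)^2}{4r^2}+O(r^{-3})$ (no ``$-1$'' in the $r^{-2}$ coefficient); with this correct value the hypothesis $\beta(n-1)^2>1+\delta$ is exactly what gives the strict pointwise bound $Q(r)<\frac{(n-1)^2\kappa}{4}-\frac{1+\delta}{4r^2}$ for large $r$, and hence the strict inequality $(5)$.
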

\begin{proof}
Let $\varphi_1(x)$ be an eigenfunction of the problem $(*)$ with the first Dirichlet eigenvalue $- \lambda_1(k, R, \delta)< 0$.  
Then, we have 
\begin{align}
  \int_R^{2kR} |\varphi_1'(x)|^2 \, dx = 
  (1 + \delta ) \int_R^{2kR} \frac{1}{4x^2} |\varphi_1(x)|^2 \, dx
   -\lambda_1 \int_R^{2kR} |\varphi_1(x)|^2 \, dx .
\end{align}
We set 
$$ 
   f(x)
   =\varphi_1(x) 
   J^{-\frac{n-1}{2}}(x).
$$ 
Then, direct computations show that
\begin{align*}
  f'(x)
   =J^{-\frac{n-1}{2}}(x)
   \left\{ \varphi_1'(x) - \frac{n-1}{2} S(x) \varphi_1(x) \right\}
\end{align*}
and 
\begin{align*}
  &| f'(x) |^2 J^{(n-1)}(x)\\
= & | \varphi_1'(x) |^2 + 
  \frac{(n-1)^2}{4} S^2(x) | \varphi_1(x) |^2 - 
  \frac{n-1}{2} S(x) \left\{ \varphi_1(x)^2 \right\} ' .
\end{align*}
As for the last term 
$ - \frac{n-1}{2} S(x) \left\{ \varphi_1(x)^2 \right\} ' $, we calculate 
\begin{align*}
  - \frac{n-1}{2} \int_R^{2kR} 
  S(x) \left\{ \varphi_1(x)^2 \right\} ' \, dx
  = \frac{n-1}{2} \int_R^{2kR} 
  S'(x) | \varphi_1(x) |^2 \, dx,
\end{align*}
and hence, 
\begin{align*}
  & \int_R^{2kR} | f'(x) |^2 J^{n-1}(x) \, dx \\
= & \int_R^{2kR} \left\{ | \varphi_1'(x) |^2 + 
  \frac{n-1}{2} \left( \frac{n-1}{2} S^2(x) 
  + S'(x) \right) | \varphi_1(x) |^2 \right\} \, dx \\
= & \int_R^{2kR} \left\{ | \varphi_1'(x) |^2 + 
  \frac{n-1}{2} \left( \frac{n-3}{2} S^2(x) 
  - R_{\min}(x) \right) | \varphi_1(x) |^2 \right\} \, dx \\
= & \int_R^{2kR} \left\{ \frac{1+\delta}{4x^2} - \lambda_1 + 
  \frac{n-1}{2} \left( \frac{n-3}{2} S^2(x) 
  - R_{\min}(x) \right) \right\} | \varphi_1(x) |^2 \, dx ,
\end{align*}
where we have used equations $(3)$  and $(6)$. 
Here, by $(1)$ and $(4)$, 
\begin{align*}
  & \frac{n-1}{2} \left( \frac{n-3}{2} S^2(x) - R_{\min}(x) \right)\\
= & \frac{n-1}{2} \left\{ \frac{n-3}{2} 
  \left( \sqrt{\kappa} - \frac{\beta}{2\sqrt{\kappa}\,x^2} + 
  O\left(\frac{1}{x^3}\right) \right)^2 + 
  \kappa - \frac{\beta }{x^2} \right\}\\
= & \frac{(n-1)^2 \kappa}{4} - \frac{\beta (n-1)^2}{4x^2} + 
  O\left(\frac{1}{x^3}\right)
\end{align*}
and, therefore,
\begin{align*}
  & \int_R^{2kR} | f'(x) |^2 J^{n-1}(x) \, dx \\
= & \int_R^{2kR} \left\{ \frac{(n-1)^2 \kappa}{4} - \lambda_1 - 
    \frac{1}{4x^2}\left( \beta (n-1)^2 - 1 - \delta \right) 
    + O\left(\frac{1}{x^3}\right) \right\} | \varphi_1(x) |^2 \, dx.
\end{align*}
Since $\beta (n-1)^2 - 1 - \delta >0$ and 
$| \varphi_1(x) |^2=|f(x)|^2 J^{n-1}(x)$, we see that
\begin{align}
  \int_R^{2kR} | f'(x) |^2 J^{n-1}(x) \, dx 
  <
  \left( \frac{(n-1)^2 \kappa}{4} - \lambda_1 \right) 
  \int_R^{2kR} | f(x) |^2 J^{n-1}(x) \, dx 
\end{align}
for $R \ge R_0(n,\beta,\kappa,\delta,R_{{\rm min}})$.

Now, for $y\in M_{\rm model}$, we set 
$$
  \phi(y) := 
\begin{cases} 
\ \ f(r(y)), \quad & {\rm if}\quad r(y) \in [R, 2kR], \\ 
\ \ 0, \quad & {\rm otherwise}. \\ 
\end{cases}
$$
Then, integrating $(7)$ over $S^{n-1}(1)$ with its standard measure, we have
$$
   \int_{M_{\rm model}} |\nabla \phi |^2 dv_{M_{\rm model}}
   < 
   \left( \frac{(n-1)^2 \kappa}{4} - \lambda_1 \right) 
   \int_{M_{\rm model}} | \phi |^2 dv_{M_{\rm model}}.
$$
Hence, mini-max principle implies our desired inequality $(5)$ for \linebreak 
$R \ge R_0(n,\beta,\kappa,\delta,R_{{\rm min}})$. 
\end{proof}
Let $\psi _1$ denote the first Dirichlet eigenfunction of ball $B(2kR)_{M_{\rm model}}$ for $R \ge R_0(n,\beta,\kappa,\delta,R_{{\rm min}})$. 
Then, $\psi _1$ is radial, that is, 
\begin{align}
  \psi _1 (y)= h_1\left(r(y)\right)
\end{align}
for some function $h_1:[0,2kR]\to {\bf R}$ and $h_1$ satisfies the equation
\begin{align}
  -h_1''(x) - (n-1) S(x) h_1'(x) 
  = \lambda _D \bigl( B(2kR)_{M_{\rm model}} \bigr) h_1(x)
\end{align}
on the interval $(0,2kR]$. 
Since $h_1$ takes the same sign on $[0,2kR)$ (by maximum principle, or see Pr\"ufer \cite{Pr}), we may assume that 
\begin{align}
  h_1(x) > 0 \qquad {\rm on}~~[0,2kR).
\end{align}
Here, we claim the following crucial fact for our proof: 
\begin{lem}
  Under the assumption $(10)$, $h_1$ satisfies
  \begin{align}
     h_1'(x) < 0\qquad {\rm on}~~(0,2kR].
  \end{align}
\end{lem}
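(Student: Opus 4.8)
The plan is to exploit the equation \eqref{10} satisfied by $h_1$ together with the sign condition \eqref{11}. Rewrite \eqref{10} in divergence form: multiplying by $J^{n-1}(x)$ and using $S = J'/J$, we get
\begin{align*}
  -\bigl( J^{n-1}(x) h_1'(x) \bigr)'
  =
  \lambda_D\bigl( B(2kR)_{M_{\rm model}} \bigr)\, J^{n-1}(x) h_1(x).
\end{align*}
Since $\lambda_D > 0$ and $h_1 > 0$ on $[0,2kR)$ by \eqref{11}, the right-hand side is strictly positive there, so $x \mapsto J^{n-1}(x) h_1'(x)$ is strictly decreasing on $(0,2kR)$. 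Because $J(0)=0$, $J'(0)=1$ and $J(t)\ge t$, one checks that $J^{n-1}(x)h_1'(x) \to 0$ as $x \to +0$ (here $h_1$ is smooth at the origin as the radial profile of a smooth eigenfunction, so $h_1'(0)$ is finite while $J^{n-1}(x)\to 0$). A strictly decreasing function that starts at $0$ is negative immediately afterwards, so $J^{n-1}(x) h_1'(x) < 0$ on $(0,2kR)$, and since $J^{n-1}(x) > 0$ there, we conclude $h_1'(x) < 0$ on $(0,2kR)$.

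It remains to handle the endpoint $x = 2kR$. At $x=2kR$ we have $h_1(2kR)=0$ with $h_1 > 0$ just to the left, so $h_1'(2kR) \le 0$; strict negativity follows because $J^{n-1}(2kR) h_1'(2kR)$ is the limit from the left of a function that is already strictly negative and strictly decreasing on $(0,2kR)$, hence $J^{n-1}(2kR)h_1'(2kR) < 0$, giving $h_1'(2kR) < 0$. Alternatively one may invoke the Hopf boundary lemma for the operator $u \mapsto -(J^{n-1}u')'$. This completes the argument.

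The main point requiring care is the behavior at the inner endpoint $x \to +0$: one must justify both that $h_1$ is differentiable there (so that the model-space eigenfunction $\psi_1 = h_1(r(\cdot))$ is genuinely smooth at the origin of $M_{\rm model}$, which forces $h_1'(0)=0$ in fact, by symmetry of a smooth radial function) and that $J^{n-1}(x)h_1'(x) \to 0$, so that the "starts at zero" conclusion for the decreasing function $J^{n-1}h_1'$ is legitimate. Everything else is a direct consequence of the sign of $\lambda_D$ and of $h_1$ via the divergence-form equation, with no delicate estimates needed.
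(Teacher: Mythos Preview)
Your argument is correct, but the route differs from the paper's. The paper argues by contradiction and local extremum analysis: it first rules out $h_1'(2kR)=0$ via ODE uniqueness, then supposes $h_1'(x_0)>0$ at some interior point, locates a minimum $x_1$ of $h_1$ to the left of $x_0$, and derives a sign contradiction from equation~(9) evaluated at $x_1$ (treating the boundary case $x_1=0$ separately by computing $\lim_{x\to 0^+}S(x)h_1'(x)=h_1''(0)$). A further step excludes interior zeros of $h_1'$ by the same second-derivative test. Your approach instead passes to the self-adjoint form $-(J^{n-1}h_1')'=\lambda_D\,J^{n-1}h_1$, reads off strict monotonicity of $J^{n-1}h_1'$ from the positivity of the right-hand side, and uses the initial value $J^{n-1}h_1'\to 0$ at the origin to conclude. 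This is cleaner and avoids case splits; the paper's version is a bit more hands-on but does not require recognizing the divergence structure. Both rely on the same regularity input at $x=0$ (finiteness of $h_1'(0)$, indeed $h_1'(0)=0$, coming from smoothness of the radial eigenfunction), which the paper also uses later in~(17).
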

\begin{proof}
The proof is by contradiction. 
 
First, let us assume that $h_1'(2kR)=0$. 
Then, since $h_1$ satisfies $(9)$ and $h_1(2kR)=0$, $h_1(x)\equiv 0$ which contradict our assumption $(10)$. 
Therefore, we see that $h_1'(2kR) < 0$ by $(10)$ and $h_1(2kR)=0$. 

Next, let us assume that $h_1'(x_0) > 0$ for some $x_0 \in (0,2kR)$. 
Then, $h_1$ must takes a minimal value at a point, say $x_1$, in $[0,x_0)$. 
If $x_1 \in (0,x_0)$, 
\begin{align}
 - h_1''(x_1) 
 = \lambda _D \bigl( B(2kR)_{M_{\rm model}} \bigr) h_1(x_1) 
 > 0
\end{align}
by our assumption $(10)$. 
However, this contradicts our assumption that $h_1$ takes a minimal value at $x_1$. 
Therefore, $x_1=0$. 
Since $h_1'(0)=0$, $f(0)=0$, $f'(0)=1$, and $S(x)=\frac{f'(x)}{f(x)}$, we see that 
\begin{align*}
  \lim _{x\to +0} S(x) h_1'(x) = h_1''(0),
\end{align*}
and hence, by $(9)$, 
\begin{align}
 - n h_1''(0) 
 = 
 \lambda _D \bigl( B(2kR)_{M_{\rm model}} \bigr) h_1(0)
 > 0.
\end{align}
Two equations $h_1'(0)=0$ and $(13)$ imply that $0$ is a maximal point of $h_1$. 
However, this contradicts our assertion, proved above, that $x_1=0$ is a minimal point of $h_1$. 
 
Thus, we have proved that 
$$
    h_1'(x)\le 0 \qquad  {\rm on}~~(0,2kR).
$$
However, if $h_1'(x_2)=0$ for some $x_2 \in (0,2kR)$, $x_2$ must be a maximal point of $h_1$ by the same reason as is seen in $(12)$. 
Therefore, $h_1'(x_2 - \varepsilon ) > 0$ for small $\varepsilon > 0$. 
This also leads to a contradiction as is seen above. 
Thus, we have proved $(11)$. 
\end{proof}
\section{Proof of Theorem $1.1$ and $1.2$}
Let us start with notations involving the cut locus ${\mathcal Cut}(\partial W)$ of the boundary $\partial W$ in $M\backslash W$. 
Assume that $W$ be a relatively compact open subset of $M$ with $C^{\infty}$-boundary $\partial W$ and let $\exp_{\partial W}:\mathcal{N}^+(\partial W)\to M\backslash W$ be the outward exponential map. 
Let $\overrightarrow{n}$ be the outward unit normal vector field along $\partial W$ and set
\begin{align*}
 & U \mathcal{N}^+(\partial W) = \{ v \in \mathcal{N}^+(\partial W) 
   \mid |v|=1 \}, \\
 & {\mathcal B}(\partial W, \delta) = \{ v \in \mathcal{N}^+(\partial W) 
   \mid |v| < \delta \}, \\
 & B(\partial W, \delta) 
   = \{ y \in M\backslash W \mid {\rm dist}\,(W,y) < \delta \}.
\end{align*}
Moreover, for each $v \in U \mathcal{N}^+(\partial W)$, define
\begin{align*}
  \rho (v) 
  = \sup 
  \left\{ t > 0 \mid {\rm dist}\,\left(W,\exp_{\partial W} (tv) \right)=t \right\}
\end{align*}
and
\begin{align*}
  {\mathcal D}_{\partial W}
  = \{ tv \in \mathcal{N}^+(\partial W) \mid 0 \le t < \rho (v), \,
  v \in U \mathcal{N}^+(\partial W) \}.
\end{align*}
Then, ${\mathcal Cut}(\partial W) = \left\{ \exp_{\partial W}\left( \rho(v)v \right) \mid v \in U \mathcal{N}^+(\partial W) \right\}$. 
Let $dA$ denote the induced measure on the boundary $\partial W$ and write the Riemannian measure $dv_g$ on the domain $\exp_{\partial W}\left({\mathcal D}_{\partial W} \right)$ as follows:
\begin{align}
  dv_g = \sqrt{g}(r, \xi)\,dr \, dA \qquad (\xi \in \partial W),
\end{align}
where $r={\rm dist}\,(W,*)$. 

We shall use the transplantation method as follows: first, for $(t,v) \in [0,\infty) \times U \mathcal{N}^+(\partial W)$ satisfying $tv \in \overline{{\mathcal B}(\partial W, R)} \cap \overline{{\mathcal D}_{\partial W}}$, define a function $H_R$ on $\overline{B(\partial W,R)}$ by
\begin{align*}
  H_R(\exp_{\partial W}(tv)) = h_1(t),
\end{align*}
where $h_1$ is the function defined by $(8)$. 
Next, using this function $H_R$, define a function $F_R$ on $M$ by 
\begin{align*}
F_R(y)=
\begin{cases}
\ \ h_1(0),  \quad & {\rm if}\quad y\in \overline{W} \\
\ \ H_R(y), \quad & {\rm if}\quad r(y) \in (0, R], \\ 
\ \ 0, \quad & {\rm otherwise}. \\ 
\end{cases}
\end{align*}
Then $F = F_R \in W^{1,2}_c (W\cup \overline{B(\partial W,R)})$, and we get 
\begin{align}
 & \int_{W\cup \overline{B(\partial W,R)}} |\nabla F|^2 dv _g = 
   \int_{\overline{B(\partial W,R)}} |\nabla F|^2 dv _g \nonumber \\
=& \int_{\partial W} dA(\xi) 
   \int_0^{\rho\left( \overrightarrow{n}(\xi) \right) \wedge R} 
   |h_1'(r)|^2 \sqrt{g}(r, \xi)\,dr
\end{align}
and
\begin{align}
 & \int_{W\cup \overline{B(\partial W,R)}} |F|^2 dv _g \nonumber \\
 = 
 & |h_1(0)|^2 \cdot {\rm Vol}(W) + 
   \int_{\partial W} dA(\xi) 
   \int_0^{\rho\left( \overrightarrow{n}(\xi) \right) \wedge R} |h_1(r)|^2 
   \sqrt{g}(r, \xi)\,dr,
\end{align}
where $\rho\left( \overrightarrow{n}(\xi) \right) \wedge R = \min \{ \rho\left( \overrightarrow{n}(\xi) \right), R \}$. 

Now, for each $\xi \in \partial W$, 
\begin{align}
 & \int_0^{\rho\left( \overrightarrow{n}(\xi) \right) \wedge R} 
   |h_1'|^2(r) \sqrt{g}(r, \xi)\,dr \nonumber \\
=& \Bigl[ h_1(r) h_1'(r) \sqrt{g}(r,\xi) \Bigr]
   _{r=0}^{r=\rho\left( \overrightarrow{n}(\xi) \right) \wedge R} 
   - \int_0^{\rho\left( \overrightarrow{n}(\xi) \right) \wedge R} h_1(r)
   \bigl\{ h_1'(r) \sqrt{g}(r,\xi)\bigr\}' \,dr \nonumber \\
=& (h_1 h_1') \bigl( \rho\left( \overrightarrow{n}(\xi) \right) \wedge R \bigr)
   \cdot \sqrt{g}\left( \rho\left( \overrightarrow{n}(\xi) \right) \wedge R, 
   \xi \right)
   - \int_0^{\rho\left( \overrightarrow{n}(\xi) \right) \wedge R} 
   h_1 \bigl\{ h_1'\sqrt{g}(r,\xi) \bigr\}' \,dr \nonumber \\
\le 
 & - \int_0^{\rho\left( \overrightarrow{n}(\xi) \right) \wedge R} 
   h_1(r) \bigl\{ h_1'(r)\sqrt{g}(r,\xi) \bigr\}' \,dr \nonumber \\
=& - \int_0^{\rho\left( \overrightarrow{n}(\xi) \right) \wedge R} 
   h_1(r) \left\{ h_1''(r) 
   + \frac{\partial _r \sqrt{g}(r,\xi)}{\sqrt{g}(r,\xi)} 
   h_1'(r) \right\} \sqrt{g}(r,\xi) \, dr \nonumber \\
\le 
 & - \int_0^{\rho\left( \overrightarrow{n}(\xi) \right) \wedge R} 
   h_1(r) \left\{ h_1''(r) + (n-1) S(r) h_1'(r) \right\} 
   \sqrt{g}(r,\xi) \, dr \nonumber \\
=& \lambda _D \bigl( B(2kR)_{M_{\rm model}} \bigr) 
   \int_0^{\rho\left( \overrightarrow{n}(\xi) \right) \wedge R} 
   |h_1|^2(r) \sqrt{g}(r,\xi) \, dr ,
\end{align}
where we have used the fact $h_1'(0)=0$ at the first equality; we have used $(10)$ and $(11)$ at the first inequality; we have used $(10)$, $(11)$, $\Delta r =\frac{\partial _r \sqrt{g}}{\sqrt{g}}$, and $(2)$ at the second inequality; we have used $(9)$ at the last equality. 

Integrating both side of the inequality $(17)$ over $\partial W$ and combining $(15)$ and $(16)$, we see that
\begin{align*}
 & \int_{\overline{B(\partial W,R)}} |\nabla F|^2 dv_g \nonumber \\
=& \int_{\partial W}\,dA(\xi) \int_0^{\rho(\overrightarrow{n}(\xi))\wedge R} 
   |h_1'(r)|^2 \sqrt{g}(r,\xi) \,dr \nonumber \\
 \le 
 & \lambda_D \big{(} B(R)_{\widehat{M}_{{\rm model}}} \big{)} 
   \left\{ \int_{W\cup \overline{B(\partial W,R)}} |F|^2 dv_g 
   - |h_1 (0)|^2 \cdot {\rm Vol}(W) \right\}.
\end{align*}
Hence, we have
\begin{align}
   \frac{\int_M |\nabla F_R|^2\,dv_g}{\int_M |F_R|^2\,dv_g} 
 \le 
 & \lambda_D\left( B(R)_{\widehat{M}_{{\rm model}}}\right) 
   \left\{ 
   1 - \frac{|h_1 (0)|^2 \cdot {\rm Vol}(W)}{\int_M |F_R|^2\,dv_g} 
   \right\} \nonumber \\
 < 
 & \lambda_D\left( B(R)_{\widehat{M}_{{\rm model}}}\right) .
\end{align}
This inequality $(18)$ holds for all $R \ge R_0(n,\beta,\kappa,\delta)$, and hence, setting $R_i=R_0(n,\beta,\kappa,\delta)+i$ and considering the corresponding functions $F_{R_i}$ as above, we get the sequence $\{F_{R_i}\}$ of functions in $W^{1,2}_c(M)$ satisfying
\begin{align*}
 & \frac{\int_M |\nabla F_{R_i}|^2 dv_g}
   {\int_M |F_{R_i}|^2 dv_g} <  \frac{(n-1)^2\kappa}{4} ;\\
 & {\rm supp} \, F_{R_i}= \overline{B(\partial W,R_i)}.
\end{align*}
Since $\{F_{R_i}\}_{i=1}^{\infty}$ spans the infinite dimensional subspace in $W^{1,2}_c(M)$, we obtain the conclusion of Theorem $1.1$ by mini-max principle. 

Taking $W=\{ y \in M \mid {\rm dist}\,(y,p_0) < \varepsilon \}$ for $0 < \varepsilon <\min \{ {\rm inj}\,(p_0), R_0 \}$ in Theorem $1.1$, we get Theorem $1.2$, where ${\rm inj}\,(p_0)$ stands for the injectivity radius at $p_0$. 
\section{Proof of Proposition $1.1$} 

In order to prove Proposition $1.1$, we first quote the following theorem from \cite{A-K}:
\begin{thm}
Let $(M, g)$ be a complete noncompact Riemannian $n$-manifold, where $n \ge 2$. Assume that one of ends of $M$, denoted by $E$, has a compact connected $C^{\infty}$ boundary $W := \partial E$ such that the outward normal exponential map $\exp_W : \mathcal{N}^+(W) \rightarrow E$ is a diffeomorphism, where 
\begin{align*} 
  \mathcal{N}^+(W) 
:= 
  \big\{ v \in TM|_W \ \big{|}\ v \ {\rm is\ outward\ normal\ to}\ W \big\}.
\end{align*} 
Assume also that the mean curvature $H_W$ of $W$ with respect to the inward unit normal vector is positive. 
Take a positive constant $R > 0$ satisfying 
\begin{align*}
  H_W \ge \frac{1}{R}\qquad {\rm on}\quad W,
\end{align*}
and set 
\begin{align*}
  \rho(x) := {\rm dist}_g(x, W), \quad 
  \widehat{r}\,(x) := \rho(x) + R \qquad {\rm for}\quad x \in E.
\end{align*}  
Then, for all $u \in C_0^{\infty}(M)$, we have 
\begin{align} 
 & \int_E |\nabla u|^2 \,dv_g \notag \\ 
\ge 
 & \int_E \left\{ \frac{1}{4 \,\widehat{r}\,^2} 
   + \frac{1}{4}(\Delta \,\widehat{r}\,)^2 
   - \frac{1}{2}|\nabla d \,\widehat{r}\,|^2 
   - \frac{1}{2} {\rm Ric}_g(\nabla \,\widehat{r}\,, \nabla \,\widehat{r}\,) \right\} 
   u^2 \,dv_g \notag \\
 & + \frac{1}{2} \int_W \Big( \Delta \,\widehat{r}\, - \frac{1}{R} \Big) u^2 
   \,d\sigma_g \notag \\ 
\ge 
 & \int_E \Big\{ \frac{1}{4 \,\widehat{r}\,^2} 
   + \frac{1}{4}(\Delta \,\widehat{r}\,)^2 
   - \frac{1}{2}|\nabla d \,\widehat{r}\,|^2 
   - \frac{1}{2} {\rm Ric}_g(\nabla \,\widehat{r}\,, \nabla \,\widehat{r}\,) 
   \Big\} |u|^2 dv_g,
\end{align}
where $d\sigma_g$ denote the $(n-1)$-dimensional Riemannian volume measure of $(W, g|_W)$. 
In particular, if $(M, g)$ has a pole $p_0 \in M$, then 
\begin{align*}
  \int_M |\nabla u|^2 \,dv_g 
\ge 
  \int_M \Big{\{} \frac{1}{4r^2} + \frac{1}{4}(\Delta_g r)^2 
  - \frac{1}{2}|\nabla dr|^2 
  - \frac{1}{2} {\rm Ric}_g(\nabla r, \nabla r) \Big{\}} |u|^2 \,dv_g, 
\end{align*} 
where $r(x) := {\rm dist}_g(x, p_0)$ for $x \in M$. 
Recall that a point $p_0$ of a Riemannian manifold $(M, g)$ is called a pole if the exponential map $\exp_{p_0}:T_{p_0}M \to M$ at $p_0$ is a diffeomorphism. 
\end{thm}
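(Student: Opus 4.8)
The plan is to read the asserted inequality as a Hardy-type (uncertainty-principle) estimate and to prove it by a completion-of-squares argument anchored on the shifted distance function $\widehat r$. Under the stated hypothesis that $\exp_W$ is a diffeomorphism onto $E$, there is no cut locus, so $\widehat r$ is smooth on all of $E$ and $|\nabla \widehat r| \equiv 1$ throughout; this regularity is what makes the whole computation rigorous. The key device I would introduce is the auxiliary radial vector field
\[
 X := \left( \frac{1}{2\,\widehat r} - \frac12\,\Delta \widehat r \right)\nabla \widehat r ,
\]
for which the trivial pointwise inequality $0 \le |\nabla u - u X|^2$, after expanding the square and writing $2u\langle \nabla u, X\rangle = \langle \nabla (u^2), X\rangle$, yields
\[
 |\nabla u|^2 \ge {\rm div}(u^2 X) - \big( {\rm div}\,X + |X|^2 \big)\, u^2 .
\]

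The step that carries the real content is the evaluation of $-{\rm div}\,X - |X|^2$. Writing $m := \Delta \widehat r$ and $\phi := \tfrac{1}{2\widehat r} - \tfrac12 m$, one has ${\rm div}\,X = \langle \nabla \phi, \nabla \widehat r\rangle + \phi\, m$, where the elementary terms come from $|\nabla \widehat r|\equiv 1$, and the only nontrivial ingredient is $\langle \nabla (\Delta \widehat r), \nabla \widehat r\rangle$. Here I would invoke the Bochner formula for $\widehat r$: since $|\nabla \widehat r|^2$ is constant, $\tfrac12\Delta|\nabla \widehat r|^2 = 0$ forces the Riccati-type identity $\langle \nabla (\Delta \widehat r), \nabla \widehat r\rangle = -|\nabla d\widehat r|^2 - {\rm Ric}_g(\nabla \widehat r, \nabla \widehat r)$. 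Substituting this and simplifying, the cross terms cancel and I expect $-{\rm div}\,X - |X|^2$ to collapse to precisely
\[
 \frac{1}{4\,\widehat r^{\,2}} + \frac14 (\Delta \widehat r)^2 - \frac12 |\nabla d\widehat r|^2 - \frac12 {\rm Ric}_g(\nabla \widehat r, \nabla \widehat r),
\]
which is exactly the integrand on the right-hand side of the theorem. The coefficients $\tfrac{1}{2\widehat r}$ and $-\tfrac12\Delta\widehat r$ in $X$ are dictated by this requirement.

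Next I would integrate over $E$ and apply the divergence theorem to $\int_E {\rm div}(u^2 X)$. As $u$ has compact support there is no boundary contribution at infinity, and on $W$ the outward unit normal of $E$ is $-\nabla \widehat r$; since $\widehat r \equiv R$ on $W$, this gives $\int_E {\rm div}(u^2 X) = -\int_W \phi\, u^2\, d\sigma_g = \tfrac12\int_W \big(\Delta \widehat r - \tfrac1R\big) u^2\, d\sigma_g$. Combined with the previous paragraph this produces the first inequality of the theorem verbatim. For the second inequality I would note that $\Delta \widehat r|_W$ equals the mean curvature $H_W$ of $W$ taken in the $\nabla\widehat r$ direction, so the hypothesis $H_W \ge 1/R$ renders the boundary integrand nonnegative and it may simply be dropped. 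The pole case then follows by applying the inequality with $W = \partial B(p_0,\varepsilon)$ and $R = \varepsilon$, so that $\widehat r$ coincides with $r = {\rm dist}_g(\cdot, p_0)$ while $H_W = \tfrac{n-1}{\varepsilon} + O(\varepsilon) \ge \tfrac1\varepsilon$ for small $\varepsilon$, and letting $\varepsilon \to 0$.

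The main obstacle is twofold. First, one has to discover the exact vector field $X$ (equivalently, an appropriate ground-state substitution $u = \psi_0 w$ with $\nabla \log \psi_0 = X$) whose completion of squares reproduces the full Bochner combination rather than a bare Hardy term; this is the creative step, and every coefficient is pinned down by the cancellation above. Second, one must keep all sign conventions consistent — the orientation of the boundary normal of $E$ along $W$ and the identification $\Delta\widehat r|_W = H_W$ — so that the boundary term acquires the favorable sign under $H_W \ge 1/R$. Once $X$ is fixed, the remainder is a routine application of the Bochner formula and the divergence theorem.
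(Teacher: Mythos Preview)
The paper does not actually prove this theorem: it is quoted verbatim from the companion paper \cite{A-K} (Akutagawa--Kumura) and used only as a black box in the proof of Proposition~1.1. There is therefore no proof in the present paper to compare your attempt against.

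Judged on its own merits, your argument is correct and is the natural one. The vector field $X=\bigl(\tfrac{1}{2\widehat r}-\tfrac12\Delta\widehat r\bigr)\nabla\widehat r$ together with the pointwise inequality $|\nabla u|^2\ge {\rm div}(u^2X)-({\rm div}\,X+|X|^2)u^2$ and the Bochner identity $\langle\nabla(\Delta\widehat r),\nabla\widehat r\rangle=-|\nabla d\widehat r|^2-{\rm Ric}_g(\nabla\widehat r,\nabla\widehat r)$ does produce exactly the claimed integrand, and the divergence theorem gives the stated boundary term with the correct sign. This is essentially the ground-state substitution $u=\widehat r^{\,1/2}\exp\bigl(-\tfrac12\int\Delta\widehat r\bigr)\,w$ written in vector-field form, and is almost certainly how \cite{A-K} proceeds as well.

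One small point in the pole case: when $n=2$ your inequality $H_W=(n-1)/\varepsilon+O(\varepsilon)\ge 1/\varepsilon$ becomes $1/\varepsilon+O(\varepsilon)\ge 1/\varepsilon$, which need not hold if the curvature correction has the wrong sign. The fix is immediate: rather than dropping the boundary term via $H_W\ge 1/R$, observe directly that $\tfrac12\int_{\partial B(p_0,\varepsilon)}(\Delta r-\tfrac1\varepsilon)u^2\,d\sigma_g\to 0$ as $\varepsilon\to 0$, since for $n\ge 3$ the integrand is $O(\varepsilon^{-1})$ on a set of measure $O(\varepsilon^{n-1})$, while for $n=2$ the integrand is $O(\varepsilon)$ on a set of measure $O(\varepsilon)$.
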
 

In view of Theorem $1.1$, it suffices to prove the following: if $\beta < 1/(n-1)^2$, $\sigma_{\rm disc}( -\Delta ) \cap \left[ 0, \frac{(n-1)^2 \kappa }{4} \right)$ is finite. 

Let us set $A(r)=\frac{h'(r)}{h(r)}$. 
Then, $A(r)$ satisfies the following Ricatti equation
\begin{align*}
  A'(r) + A^2(r) + K(r) = 0 \quad {\rm on}~~(0,\infty).
\end{align*} 
Assume that $K(r)$ satisfies
\begin{align}
  K(r) \ge 0 \quad {\rm on}~~(0,\infty)
\end{align}
and
\begin{align}
  K(r) = - \kappa + \frac{\beta}{r^2} \qquad \mbox{for}~~ r \ge R_0,
\end{align} 
where $\kappa >0$, $\beta < 1/(n-1)^2$, and $R_0>0$ are constants. 
In view of $(16)$, the comparison theorem implies that
\begin{align}
  A(r) \ge \frac{1}{r} > 0 \quad {\rm on}~~(0,\infty).
\end{align}
Using the comparison theorem again together with $(17)$ and $(18)$ makes
\begin{align}
  A(r) = \sqrt{\kappa} -  \frac{\beta}{2\sqrt{\kappa}\,r^2} 
  + O \left( \frac{1}{r^3} \right) \quad {\rm as}~~r \to \infty.
\end{align}
Therefore, we have
\begin{align*}
  \frac{1}{4}(\Delta r )^2 - \frac{1}{2}|\nabla dr|^2 
=& \frac{1}{4}(n-1)^2A^2(r) - \frac{1}{2} (n-1)A^2(r) \\
=& \frac{(n-1)(n-3)}{4}A^2(r) \\
=& \frac{(n-1)(n-3)}{4} \left( \kappa - \frac{\beta}{r^2} \right) 
   + O \left( \frac{1}{r^3} \right),
\end{align*}
and hence, 
\begin{align*}
 & \frac{1}{4r^2} + \frac{1}{4}(\Delta r )^2 - \frac{1}{2}|\nabla dr|^2 
   - \frac{1}{2} {\rm Ric}_g(\nabla r, \nabla r) \\
=& \frac{1}{4r^2} + \frac{(n-1)(n-3)}{4} \left( \kappa - \frac{\beta}{r^2} 
   \right) - \frac{(n-1)}{2}\left( - \kappa + \frac{\beta}{r^2} \right) 
   + O \left( \frac{1}{r^3} \right) \\
=& \frac{(n-1)^2\kappa}{4} + \frac{1}{4r^2}\left\{ 1 - (n-1)^2\beta \right\} 
   + O \left( \frac{1}{r^3} \right).
\end{align*}
Hence, substituting
\begin{align*}
 E={\bf R}^n - B_0(R), \quad \rho(x)={\rm dist}_g(x,\partial B_0(R)), 
 \quad \widehat{r}\,(x)=\rho(x)+R=r(x)
\end{align*}
into the equation $(15)$ in Theorem $4.1$, we see that the following inequality holds for all $u \in C_0^{\infty}( {\bf R}^n )$ and $R>0$, since the metric is rotationally symmetric: 
\begin{align*} 
 & \int_{{\bf R}^n - B_0(R)} |\nabla u|^2 \,dv_g \\ 
\ge 
 & \int_{{\bf R}^n - B_0(R)} \left\{ \frac{1}{4 r^2} 
   + \frac{1}{4}(\Delta r)^2 
   - \frac{1}{2}|\nabla d r|^2 
   - \frac{1}{2} {\rm Ric}_g(\nabla r, \nabla r,) \right\} 
   |u|^2 \,dv_g \\
 & + \frac{1}{2} \int_{\partial B_0(R)} \Big( \Delta r - \frac{1}{R} \Big) 
   |u|^2 \,d\sigma_g \\
=& \int_{{\bf R}^n - B_0(R)} \left\{ 
   \frac{(n-1)^2\kappa}{4} + \frac{1}{4r^2}\left\{ 1 - (n-1)^2\beta \right\} 
   + O \left( \frac{1}{r^3} \right) \right\} |u|^2 \,dv_g \\
 & + \frac{1}{2} \int_{\partial B_0(R)} 
   \left\{ (n-1)\sqrt{\kappa} - \frac{1}{R} 
   - O\left(\frac{1}{R^2}\right) \right\} |u|^2 \,d\sigma_g,
\end{align*}
where we have used $\Delta r = (n-1)A(r)=(n-1)\sqrt{\kappa}-O(r^{-2})$ (see $(19)$); also, we set $B_0(R)=\{ x \in {\bf R}^n \mid {\rm dist}\,(x,0) =R \}$ and $0$ represents the origin of ${\bf R}^n$. 
Therefore, since $1>(n-1)^2\beta$, there exits a constant $R_1>R_0$ such that 
\begin{align}
 & \int_{{\bf R}^n - B_0(R)} |\nabla u|^2 \,dv_g \nonumber \\ 
\ge 
 & \int_{{\bf R}^n - B_0(R)} \frac{(n-1)^2\kappa}{4}  |u|^2 \,dv_g 
   \qquad {\rm for~all}~u\in C_0^{\infty}({\bf R}^n)~{\rm and}~R\ge R_1.
\end{align}
Now, let $\Delta_{B_0(R_1)}$ be the Laplacian on $(B_0(R_1),dr^2 + h^2(r) g_{S^{n-1}(1)})$ with vanishing Neumann boundary condition and 
\begin{align}
  0=\mu_0 < \mu_1 \le \cdots \le \mu_i \le \mu_{i+1} \le \cdots \nearrow \infty
\end{align}
be its eigenvalues with each eigenvalues repeated according to its multiplicity. 
Also, let $\Delta_{{\bf R}^n - B_0(R_1)}$ be the Laplacian on $\left( {\bf R}^n - B_0(R_1), dr^2 + h^2(r) g_{S^{n-1}(1)} \right)$ with vanishing Neumann boundary condition. 
Then, from $(20)$, we see that 
\begin{align}
  \sigma \left( -\Delta_{{\bf R}^n - B_0(R_1)} \right) \subset 
  \left[\frac{(n-1)^2\kappa}{4},\infty \right). 
\end{align}
Hence, the domain monotonicity principle (vanishing Neumann boundary data) due to Courant-Hilbert \cite{C-H} (see also \cite{Cha} pp.~13), together with $(21)$ and $(22)$, implies that 
\begin{align*}
 & \sharp \left\{ \lambda \in \sigma_{{\rm disc}}(-\Delta) ~\Big|~ \lambda 
  < \frac{(n-1)^2\kappa}{4} \right\} \\
\le 
 & \sharp \left\{ \mu_i ~\Big|~ \mu_i < \frac{(n-1)^2\kappa}{4} \right\} 
< \infty.
\end{align*}
Here, ``$\,\sharp\,$'' represents the counting function of eigenvalues with each eigenvalues repeated according to its multiplicity. 
Thus, we have proved Proposition $1.1$.

\section{Applications and remarks}

Reflecting our proof, we see that the following holds:
\begin{prop}
Let $W$ be a relatively compact open subset of a Riemannian manifold $(M,g)$ of dimension $n$. 
Assume that $\partial W$ is $C^{\infty}$, and that the outward normal exponential map $\exp_{\partial W}:N^+(\partial W) \to M\backslash W$ is a diffeomorphism. 
Moreover, assume that $\Delta r = (n-1) \left\{ \sqrt{\kappa}-\frac{\beta}{2\sqrt{\kappa}\,r^2} + O \left( \frac{1}{r^3} \right)\right\}~~(r\to \infty)$ on $M\backslash W$, where $r={\rm dist}(W,*)$ on $M\backslash W$$;$ $\kappa$ and $\beta$ are positive constants. 
If $\beta > 1/(n-1)^2$, then $\sigma_{{\rm ess}}(-\Delta)=\left[ \frac{(n-1)^2\kappa}{4}, \infty \right)$ and $\sigma_{{\rm disc}}(-\Delta)$ is infinite. 
\end{prop}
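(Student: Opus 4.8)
The plan is to combine the two mechanisms already developed in the paper. On one hand, the transplantation scheme of Sections~$2$--$3$ never uses the Ricci bound except through the comparison inequality $(2)$, namely $\Delta r\le (n-1)S(r)$ with $S$ carrying the asymptotics $(4)$; since in the present statement the asymptotics of $\Delta r$ are \emph{given} outright, one can manufacture such an $S$ by hand and re-run that scheme. On the other hand, in place of Proposition~$1.1$ one must compute $\sigma_{\rm ess}(-\Delta)$ directly from the behaviour of $\Delta r$; this is elementary because only the \emph{trace} $\Delta r$ is controlled here (the weighted inequality of Theorem~$4.1$ is unavailable, as it involves the full Hessian of $r$ and ${\rm Ric}$).

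First I would fix a constant $\beta''$ with $\tfrac1{(n-1)^2}<\beta''<\beta$ and choose a smooth function $S:[0,\infty)\to{\bf R}$ with $S(r)=\sqrt\kappa-\tfrac{\beta''}{2\sqrt\kappa\,r^2}$ for $r\ge R_2$ (a suitable constant) and $(n-1)S(r)\ge\Delta r$ on all of $M\setminus W$; this is possible because $(n-1)S(r)-\Delta r=\tfrac{(n-1)(\beta-\beta'')}{2\sqrt\kappa\,r^2}+O(r^{-3})>0$ for $r\ge R_2$ large, while on the compact collar $\overline{B(\partial W,R_2)}\setminus W$ the function $\Delta r$ is bounded and $S$ may be chosen large. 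Setting $J(r):=\exp\bigl(\int_0^r S\bigr)$ (so $J'/J=S$, $J(0)=1$) and $M_{\rm model}:=S^{n-1}(1)\times[0,\infty)$ with metric $dr^2+J(r)^2 g_{S^{n-1}(1)}$, one computes directly (bypassing the Riccati equation $(3)$, since $S$ is now primitive) that $\tfrac{n-1}2\bigl(\tfrac{n-1}2S^2+S'\bigr)=\tfrac{(n-1)^2\kappa}4-\tfrac{(n-1)^2\beta''}{4r^2}+O(r^{-3})$, which is exactly the quantity evaluated in the proof of Proposition~$2.2$. Hence Propositions~$2.1$ and~$2.2$ and Lemma~$2.1$ carry over verbatim, with $\beta''$ in place of $\beta$ and with the Dirichlet ball $B(2kR)_{M_{\rm model}}$ replaced by the tube $\{0\le r\le 2kR\}$ carrying Neumann data at $r=0$ (harmless, since the test function of Proposition~$2.2$ is supported in $\{R\le r\le 2kR\}$): for suitable $\delta$ and $k$ and all large $R$, the corresponding first eigenvalue is $<\tfrac{(n-1)^2\kappa}4$, with radial eigenfunction $h_1$ satisfying $h_1>0$, $h_1'<0$ on $(0,2kR]$, $h_1'(0)=0$, $h_1(2kR)=0$.

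Next I would compute $\sigma_{\rm ess}(-\Delta)$. For the lower bound: if $u\in C_0^\infty(M)$ is supported in $\{r>R_1\}$, where $\Delta r\ge(n-1)\sqrt\kappa-\varepsilon$, then integrating $\Delta r={\rm div}(\nabla r)$ against $u^2$ gives $\bigl((n-1)\sqrt\kappa-\varepsilon\bigr)\int u^2\le\int u^2\,\Delta r=-2\int u\,\langle\nabla u,\nabla r\rangle\le2\|u\|\,\|\nabla u\|$, whence $\|\nabla u\|^2\ge\tfrac14\bigl((n-1)\sqrt\kappa-\varepsilon\bigr)^2\|u\|^2$; by Persson's characterization and $R_1\to\infty$, $\min\sigma_{\rm ess}(-\Delta)\ge\tfrac{(n-1)^2\kappa}4$. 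For the reverse inclusion, given $\mu=\tfrac{(n-1)^2\kappa}4+\tau^2$ with $\tau\ge0$, take $u_j(y):=\chi_j(r(y))\,e^{-\frac{n-1}2\sqrt\kappa\,r(y)}e^{i\tau r(y)}$ with $\chi_j$ supported in $[j,2j]$ and $|\chi_j^{(\ell)}|\lesssim j^{-\ell}$ for $\ell=0,1,2$; since $\partial_r\log\sqrt g=\Delta r\to(n-1)\sqrt\kappa$ the density $\sqrt g(r,\xi)$ is comparable to $e^{(n-1)\sqrt\kappa r}$, so $\|u_j\|^2\asymp j\to\infty$, while the ODE $-g''-(n-1)\sqrt\kappa\,g'=\mu g$ satisfied by $g(r)=e^{-\frac{n-1}2\sqrt\kappa r}e^{i\tau r}$ together with $|\Delta r-(n-1)\sqrt\kappa|\lesssim r^{-2}$ makes $\|(-\Delta-\mu)u_j\|=o(\|u_j\|)$. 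Thus $\{u_j/\|u_j\|\}$ is a singular Weyl sequence, $\mu\in\sigma_{\rm ess}(-\Delta)$, and $\sigma_{\rm ess}(-\Delta)=\bigl[\tfrac{(n-1)^2\kappa}4,\infty\bigr)$.

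Finally I would transplant the eigenfunction $h_1$ of the second step onto $M$ exactly as in Section~$3$ --- now with $\rho(v)\equiv\infty$, since $\exp_{\partial W}$ is a diffeomorphism --- using $(n-1)S\ge\Delta r$ and $h_1'<0$ wherever $(2)$ was used, obtaining for each large $R$ a function in $W^{1,2}_c(M)$ with Rayleigh quotient $<\tfrac{(n-1)^2\kappa}4$. To feed the mini-max principle rigorously one wants infinitely many such functions with pairwise disjoint supports; these are obtained by running the same construction around a rapidly increasing sequence of smooth compact level hypersurfaces $\{r=L_i\}$ (each plays the role of $\partial W$, since the distance to $\{r=L_i\}$ is $r-L_i$ and $\Delta(r-L_i)=\Delta r$ has the same asymptotics), producing functions supported in the annuli $\{L_{i-1}\le r\le L_i\}$. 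The point needing care is that such an annular profile must vanish at \emph{both} ends, so the sign condition on $h_1'$ fails on the inner stretch; but there the excess $(n-1)S-\Delta r$ is $O(L_{i-1}^{-2})$, which costs only an $O(L_{i-1}^{-2})$ correction in the energy estimate, absorbed by taking the $L_i$ large. Since $\min\sigma_{\rm ess}(-\Delta)=\tfrac{(n-1)^2\kappa}4$, the mini-max principle then gives that $\sigma_{\rm disc}(-\Delta)\cap[0,\tfrac{(n-1)^2\kappa}4)$ is infinite. I expect the main obstacle to be the construction of $S$ in the second step --- exhibiting a single explicit function dominating $\Delta r$ globally on $M\setminus W$ yet carrying the precise sub-leading term $\sqrt\kappa-\tfrac{\beta''}{2\sqrt\kappa r^2}$ with $S'=O(r^{-3})$, which is exactly where the strict inequality $\beta>\tfrac1{(n-1)^2}$ enters (it lets one slip $\beta''$ strictly between $\tfrac1{(n-1)^2}$ and $\beta$ so as to absorb the $O(r^{-3})$ remainder while keeping $(n-1)^2\beta''>1$) --- together with the bookkeeping in the last step needed to replace the nested-support test functions of Section~$3$ by disjointly supported ones without spoiling the estimate.
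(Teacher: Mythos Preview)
Your proposal is correct and follows the paper's intended route: since the paper offers no separate argument for this proposition beyond the phrase ``reflecting our proof'', the proof meant is exactly the adaptation of Sections~2--3 that you outline, replacing the Ricci lower bound and the Laplacian comparison~$(2)$ by the \emph{given} asymptotic of $\Delta r$ together with a hand-built radial profile $S$ satisfying $(n-1)S\ge\Delta r$ and the expansion~$(4)$. Your explicit determination of $\sigma_{\rm ess}(-\Delta)$ --- a Cheeger/Persson lower bound from $\Delta r\ge (n-1)\sqrt{\kappa}-\varepsilon$ and a Weyl-sequence upper bound using $u_j=\chi_j(r)e^{(-\frac{n-1}{2}\sqrt{\kappa}+i\tau)r}$ --- is a necessary addition, since here the essential spectrum is part of the conclusion rather than a hypothesis as in Theorem~1.1; the paper leaves this implicit.

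The one place your write-up diverges is the final mini-max step: you worry (rightly) that the nested-support functions $F_{R_i}$ of Section~3 do not obviously give an $N$-dimensional subspace on which \emph{every} Rayleigh quotient is below $\tfrac{(n-1)^2\kappa}{4}$, and you propose annular test functions instead. Be aware that this fix is more delicate than you indicate: once the profile vanishes at both ends, $h_1'$ changes sign and the comparison step~$(17)$ fails on the inner stretch; the resulting error and the spectral gap $\lambda_1$ are \emph{both} of order $R^{-2}$, so ``absorbed by taking the $L_i$ large'' is not automatic and requires tracking constants. The paper itself is content with the nested-support argument, so if you are matching the paper you may simply follow Section~3 verbatim; if you want a fully rigorous version, the cleanest route is to exploit the \emph{two-sided} control of $\Delta r$ available here (choose $S$ with $\beta''=\beta$, so $\sqrt{g}(r,\xi)=c(\xi)J(r)^{n-1}(1+O(r^{-2}))$) and transplant the annular test function $\phi$ of Proposition~2.2 directly, comparing the $O(R^{-2})$ density error against the explicit gap $\lambda_1=\mu_1(\delta,k)/R^2$ with $k$ taken large.
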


In Proposition $5.1$, $\partial W$ may have a finite number of components. 
Using Proposition $5.1$, we can construct examples with infinite number of the discrete spectrum of the Laplacian. 

In Theorem $1.1$ and $1.2$, we assumed that 
\begin{align}
  \min \sigma_{{\rm ess}} (-\Delta ) = \frac{(n-1)^2 \kappa }{4}.
\end{align}
The condition $(27)$ is satisfied if the inequality
\begin{align}
  \sup \{ \mathfrak{h}(M\backslash K) \mid K \subset M{\rm ~is~compact} \} 
  \ge 
  (n-1)\sqrt{\kappa}
\end{align}
holds under our curvature assumption, where $\mathfrak{h}(M\backslash K):=\inf\left\{ \frac{{\rm Vol}_{n-1}(\partial \Omega )}{{\rm Vol}_{n}(\Omega)} \mid \Omega \subset M\backslash K \right\}$ is the Cheeger constant of $M\backslash K$; 
next, the condition $(28)$ holds if there exists a $C^{\infty}$-function $f$ defined near infinity satisfying 
\begin{align*}
  \liminf_{M \ni y \to \infty}\Delta f (y)\ge (n-1)\sqrt{\kappa} ~~
  {\rm and}~~ 
  |\nabla f|\le 1.
\end{align*}

Modifying our arguments, we also get the following:
%
%
\begin{thm}
Let $(M,g)$ be an $n$-dimensional noncompact complete Riemannian manifold and $W$ a relatively compact open subset of $M$ with $C^{\infty}$-boundary $\partial W$. 
We set $r(*):={\rm dist}(*,\partial W)$ on $M\backslash W$. 
Let $\exp_{\partial W}: \mathcal{N}^+(\partial W) \rightarrow M\backslash W$ be the outward normal exponential map and ${\mathcal Cut}(\partial W)$ the corresponding cut locus of $\partial W$ in $M \backslash W$, where 
\begin{align*} 
  \mathcal{N}^+(\partial W) 
:= 
  \big\{ v \in TM|_{\partial W} \, \big{|} \, v \ {\rm is\ outward\ normal\ to}\ \partial W \big\}.
\end{align*}
Assume that there exist positive constants $\kappa$ and $R_0$ and positive-valued continuous function $\varphi$ of $t \in [r_0,\infty)$ such that
\begin{align*}
 & {\rm Ric}_{g}\left( \nabla r, \nabla r \right) (y)
   \ge 
   - (n-1)\kappa - \varphi (r(y)) \\
 & \hspace{30mm}\mbox{for}~y \in M \backslash 
   \left( W \cup {\mathcal Cut}(\partial W) \right)~\mbox{with}~ r(y) \ge R_0
\end{align*}
and 
$$
  \lim_{t\to\infty} \varphi(t)=0.
$$
Then, $\sigma_{\rm ess}( -\Delta ) \cap \left[ 0, (n-1)^2 \kappa /4 \right] \neq \emptyset$, where $\sigma_{\rm ess}( -\Delta )$ stands for the essential spectrum of $-\Delta$. 
\end{thm}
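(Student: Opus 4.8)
\emph{Plan.} The plan is to prove that $\min\sigma_{\rm ess}(-\Delta)\le\frac{(n-1)^2\kappa}{4}$; since $\sigma_{\rm ess}(-\Delta)$ is a closed subset of $[0,\infty)$, this forces $\min\sigma_{\rm ess}(-\Delta)\in\sigma_{\rm ess}(-\Delta)\cap[0,\frac{(n-1)^2\kappa}{4}]$, which is the assertion. By the Persson/mini-max description of the bottom of the essential spectrum,
$\min\sigma_{\rm ess}(-\Delta)=\sup_{K}\inf\{\int_M|\nabla u|^2\,dv_g\,/\,\int_M|u|^2\,dv_g : u\in C^\infty_0(M\setminus K)\setminus\{0\}\}$
with $K$ ranging over the compact subsets of $M$, it is enough to produce, for every $\epsilon>0$ and every $R>0$, a nonzero $u\in C^\infty_0(M)$ supported in $\{y:r(y)\ge R\}$ with $\int_M|\nabla u|^2\,dv_g<(\frac{(n-1)^2\kappa}{4}+\epsilon)\int_M|u|^2\,dv_g$; equivalently, as in the proof of Theorem $1.1$, it suffices to produce an infinite-dimensional family of such functions and invoke the mini-max principle.

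First I would set up a model space as in Section $2$, but with a nonpositive continuous $R_{\min}$ satisfying $(n-1)R_{\min}(t)\le{\rm Ric}_g(\nabla r,\nabla r)$ wherever the latter is defined and $R_{\min}(t)=-\kappa-\frac{1}{n-1}\varphi(t)$ for $t\ge R_1$. Since $\varphi(t)\to0$, the Riccati equation $(3)$ gives $S(t)\to\sqrt{\kappa}$ as $t\to\infty$ (the $\beta/t^2$-term in $(4)$ being replaced by an $o(1)$-term), and hence $\Delta r\le(n-1)S(r)$ on $M\setminus(W\cup{\mathcal Cut}(\partial W))$ with $(n-1)S(r)\to(n-1)\sqrt{\kappa}$. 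Running the computation preceding $(7)$ with no Hardy term present (i.e.\ with $\delta=0$, $\lambda_1=0$, and $\beta=0$), one gets, for a radial function $f(x)=\varphi_1(r(x))J^{-\frac{n-1}{2}}(r(x))$ on the model, the identity $\int|f'|^2J^{n-1}=\int\{|\varphi_1'|^2+\frac{n-1}{2}(\frac{n-3}{2}S^2-R_{\min})|\varphi_1|^2\}$, where now $\frac{n-1}{2}(\frac{n-3}{2}S^2(x)-R_{\min}(x))=\frac{(n-1)^2\kappa}{4}+o(1)$ as $x\to\infty$. Thus the borderline value $\frac{(n-1)^2\kappa}{4}$ is produced here by the asymptotics of $S$ alone, with no curvature--Hardy competition, and (choosing $\varphi_1$ a long plateau far out) this shows $\lambda_D(B(R)_{M_{\rm model}})<\frac{(n-1)^2\kappa}{4}+\epsilon$ for all sufficiently large $R$.

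Next I would transplant exactly as in Section $3$: for $R$ large let $h_1=h_{1,R}$ be the first Dirichlet eigenfunction of $B(R)_{M_{\rm model}}$, which is radial and, by Lemma $2.1$, strictly decreasing once $\lambda_D(B(R)_{M_{\rm model}})>0$; define $F_R(\exp_{\partial W}(tv))=h_1(t)$ on $\overline{{\mathcal B}(\partial W,R)}\cap\overline{{\mathcal D}_{\partial W}}$, $F_R\equiv h_1(0)$ on $\overline W$, and $F_R\equiv0$ elsewhere. The chain $(15)$--$(18)$, which uses only $h_1'(0)=0$, $h_1>0$, $h_1'<0$, $\Delta r=\partial_r\sqrt{g}/\sqrt{g}$ and $(2)$ (the last valid in the distributional sense across ${\mathcal Cut}(\partial W)$), then yields $\int_M|\nabla F_R|^2\,dv_g\,/\,\int_M|F_R|^2\,dv_g<\lambda_D(B(R)_{M_{\rm model}})<\frac{(n-1)^2\kappa}{4}+\epsilon$. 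Taking $R_i=R_0(\cdots)+i$ gives functions with strictly increasing supports $\overline{B(\partial W,R_i)}$, and the mini-max principle as in the proof of Theorem $1.1$ should give $\dim{\rm Ran}\,E_{(-\infty,\,\frac{(n-1)^2\kappa}{4}+\epsilon)}(-\Delta)=\infty$, whence $\min\sigma_{\rm ess}(-\Delta)\le\frac{(n-1)^2\kappa}{4}+\epsilon$; letting $\epsilon\downarrow0$ finishes.

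The step I expect to be the main obstacle is precisely this last one. The $F_{R_i}$ above all contain $\overline W$ in their support, so a priori they only bound $\min\sigma(-\Delta)$ rather than $\min\sigma_{\rm ess}(-\Delta)$, and the Rayleigh quotient need not remain below $\frac{(n-1)^2\kappa}{4}+\epsilon$ on their linear span. To make the mini-max argument legitimate one wants $F_{R_i}$ with essentially disjoint supports, obtained by transplanting radial model functions supported in far-out annuli $\{R_i'\le r\le R_i''\}$, $R_i'\to\infty$. Such annular functions vanish at both ends of their support and are therefore not monotone, so the monotonicity-dependent chain $(17)$ must be replaced: the natural substitute is to write the model function as $J^{-\frac{n-1}{2}}$ times a plateau cut-off and carry the estimate through directly on $M$, using that $\sqrt{g}(r,\xi)/J(r)^{n-1}$ is non-increasing in $r$ (a restatement of $\Delta r\le(n-1)S(r)$) together with the one-sided control $\liminf_{r\to\infty}\Delta r\ge-(n-1)\sqrt{\kappa}$ along minimizing normal geodesics (which follows from the radial Riccati comparison and the absence of focal points on such geodesics). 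Alternatively one can bypass the transplantation entirely: from $\Delta r\le(n-1)S(r)$ the area $A(t)$ of the regular part of $\{r=t\}$ satisfies $\frac{d}{dt}\log A(t)\le(n-1)S(t)$, so $A(t)\le e^{((n-1)\sqrt{\kappa}+o(1))t}$, i.e.\ the volume of $M$ grows at infinity with exponential rate at most $(n-1)\sqrt{\kappa}$; a Brooks-type bound for the bottom of the essential spectrum in terms of the exponential volume growth at infinity then gives $\min\sigma_{\rm ess}(-\Delta)\le\frac14((n-1)\sqrt{\kappa})^2$, which is exactly the conclusion. In either approach the only routine ingredients are the Riccati asymptotics $S\to\sqrt{\kappa}$ and the distributional validity of the Laplacian comparison across ${\mathcal Cut}(\partial W)$.
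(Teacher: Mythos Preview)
The paper gives no explicit proof of Theorem~5.1; it only says ``modifying our arguments''. Your first two paragraphs carry out exactly that modification: replace $R_{\min}(t)=-\kappa+\beta/t^2$ by $R_{\min}(t)=-\kappa-\varphi(t)/(n-1)$, note that the Riccati equation now gives $S(t)\to\sqrt{\kappa}$ (an $o(1)$ remainder in place of $(4)$), so that the coefficient $\frac{n-1}{2}\bigl(\frac{n-3}{2}S^2-R_{\min}\bigr)$ in the computation preceding $(7)$ equals $\frac{(n-1)^2\kappa}{4}+o(1)$, and transplant via $(15)$--$(18)$ to get $F_R$ with Rayleigh quotient below $\frac{(n-1)^2\kappa}{4}+\epsilon$ for all large $R$. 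This is precisely the intended argument.

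Your diagnosis of the obstacle is also correct, and it is worth stressing that the same looseness is already present in the paper's proof of Theorem~1.1: from an infinite linearly independent family $\{F_{R_i}\}$ each with Rayleigh quotient below a threshold one cannot conclude, without more, that the Rayleigh quotient is below that threshold on their \emph{span}, which is what the mini-max principle requires. The paper does not address this point.

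Of your two proposed remedies, option~(b) is clean and correct: the distributional Laplacian comparison $(2)$ integrates to $\sqrt{g}(r,\xi)\le C(\xi)J^{n-1}(r)$, whence ${\rm vol}\,B(\partial W,t)$ grows at exponential rate at most $(n-1)\sqrt{\kappa}$; Brooks's theorem~\cite{B} then gives $\min\sigma_{\rm ess}(-\Delta)\le\frac{(n-1)^2\kappa}{4}$ directly (with the finite-volume case handled by the companion statement in \cite{B}). This is a genuinely different and shorter route than the paper's transplantation machinery.

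Option~(a) is more delicate than you indicate. Your claimed lower bound $\liminf_{r\to\infty}\Delta r\ge-(n-1)\sqrt{\kappa}$ along minimizing normal geodesics is in fact true, via a Riccati blow-up argument, but only along rays with $\rho(\xi)=\infty$; for directions hitting the cut locus no such bound is available. More seriously, even granting it, the estimate does not close: writing $\theta(r,\xi)=\sqrt{g}(r,\xi)/J^{n-1}(r)$, the Laplacian comparison gives only $\theta'\le 0$, so $\theta$ may decay arbitrarily fast in $r$, and the ratio $\bigl(\int|\chi'|^2\theta\,dr\bigr)\big/\bigl(\int\chi^2\theta\,dr\bigr)$ for a plateau cut-off $\chi$ on $[R_1,R_2]$ need not be small. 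No pointwise lower bound on $\Delta r$ along individual rays repairs this, because after integrating over $\partial W$ the relevant quantity is the area ratio $A(r)/J^{n-1}(r)$, for which one again has only an upper bound. So option~(a), as sketched, has a real gap; option~(b) is the route that actually works.
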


Theorem $5.1$ immediately implies the following

\begin{cor}
Let $(M,g)$ be an $n$-dimensional noncompact complete Riemannian manifold and $p_0$ a fixed point of $M$. 
We set $r(*):={\rm dist}(*,p_0)$ and denote by ${\mathcal Cut}(p_0)$ the cut locus of $p_0$. 
Assume that there exist positive constants $\kappa$ and $R_0$ and positive-valued continuous function $\varphi$ of $t \in [r_0,\infty)$ such that
\begin{align*}
 & {\rm Ric}_{g}\left( \nabla r, \nabla r \right) (y)
   \ge 
   - (n-1)\kappa - \varphi (r(y)) \\
 & \hspace{30mm}\mbox{for}~y \in M \backslash 
   \left( W \cup {\mathcal Cut}(p_0) \right)~\mbox{with}~ r(y) \ge R_0
\end{align*}
and 
$$
 \lim_{t\to\infty} \varphi(t)=0.
$$
Then, 
$\sigma_{\rm ess}( -\Delta ) \cap \left[ 0, (n-1)^2 \kappa /4 \right] \neq \emptyset$. 
\end{cor}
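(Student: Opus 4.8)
The plan is to prove Theorem~5.1 by a Weyl-sequence argument, constructing an infinite family of approximate eigenfunctions (a singular sequence) concentrated near infinity whose Rayleigh quotients accumulate at a point of $\bigl[0, (n-1)^2\kappa/4\bigr]$. The strategy is essentially the reverse of the obstruction encountered in Theorem~1.1: there we needed the strictly-stronger-than-borderline lower bound $\beta>1/(n-1)^2$ to push Rayleigh quotients \emph{strictly below} $(n-1)^2\kappa/4$; here the weaker hypothesis ${\rm Ric}_g(\nabla r,\nabla r)\ge -(n-1)\kappa-\varphi(r)$ with $\varphi\to 0$ should only suffice to produce essential spectrum \emph{at or below} $(n-1)^2\kappa/4$, which is exactly what is claimed.

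First I would recall from the Laplacian comparison theorem that, under ${\rm Ric}_g(\nabla r,\nabla r)\ge -(n-1)\kappa-\varphi(r)$, one has a two-sided control of $\Delta r$ on $M\backslash(W\cup{\mathcal Cut}(\partial W))$: in the sense of distributions on $M\backslash W$, $\Delta r \le (n-1)S_\varphi(r)$ where $S_\varphi$ solves the Riccati equation $S_\varphi'+S_\varphi^2-\kappa-\tfrac{\varphi}{n-1}=0$ with $S_\varphi(0^+)=+\infty$, so that $S_\varphi(r)\to\sqrt{\kappa}$ as $r\to\infty$ by the hypothesis $\varphi\to 0$. The volume density $\sqrt{g}(r,\xi)$ along a unit normal geodesic therefore satisfies $\partial_r\log\sqrt{g}(r,\xi)=\Delta r \le (n-1)S_\varphi(r)$, giving an upper bound $\sqrt{g}(r,\xi)\le C(\xi)\exp\bigl((n-1)\int_{R_0}^r S_\varphi\bigr)$ of the form $e^{(n-1)\sqrt{\kappa}\,r(1+o(1))}$; for the lower bound one uses $J(r)\ge r$ and more precisely that the radial Jacobi fields do not degenerate before the cut locus, so $\sqrt{g}(r,\xi)\ge c(\xi)r^{n-1}$ up to $\rho(\overrightarrow{n}(\xi))$.

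Next I would build the test functions. Fix a point $\xi_0\in\partial W$ whose normal geodesic is minimizing for all time (if no such $\xi_0$ exists, i.e.\ every normal ray hits the cut locus, one works instead with a shrinking solid-angle tube $\exp_{\partial W}(\{tv : v\in\mathcal{V}\})$ over an open set $\mathcal{V}\subset U\mathcal{N}^+(\partial W)$, using Fubini in $(t,v)$ as in Section~3). For $a_j\to\infty$ and widths $L_j\to\infty$ with $L_j/a_j\to 0$, set $u_j(\exp_{\partial W}(tv))=e^{-\frac{n-1}{2}\sqrt{\kappa}\,t}\,\eta_j(t)$ on the tube, where $\eta_j$ is a smooth bump supported in $[a_j,a_j+L_j]$ with $\int|\eta_j'|^2/\int|\eta_j|^2\to 0$ (e.g.\ a fixed profile rescaled to width $L_j$). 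The exponential weight $e^{-\frac{n-1}{2}\sqrt{\kappa}\,t}$ is the standard ``WKB'' choice that cancels the leading $(n-1)^2\kappa/4$ coming from the volume growth: writing $u_j=e^{-\frac{n-1}{2}\sqrt{\kappa}\,t}w_j$ and integrating by parts against $\sqrt{g}\,dt\,dA$ exactly as in the chain of inequalities $(17)$ of Section~3, the cross term is controlled by $\Delta r$, and one finds
\begin{align*}
  \frac{\int_M|\nabla u_j|^2\,dv_g}{\int_M|u_j|^2\,dv_g}
  \le
  \frac{(n-1)^2\kappa}{4}
  + \sup_{t\ge a_j}\Bigl|(n-1)\bigl(S_\varphi(t)-\sqrt{\kappa}\bigr)\Bigr|\cdot\frac{n-1}{2}\sqrt{\kappa}
  + \frac{\int|\eta_j'|^2\,w\,dt}{\int|\eta_j|^2\,w\,dt} + o(1),
\end{align*}
and since $S_\varphi(t)\to\sqrt{\kappa}$ and the kinetic term from $\eta_j'$ tends to $0$, the right-hand side tends to $(n-1)^2\kappa/4$. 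Because the supports are disjoint (choose $a_{j+1}>a_j+L_j$) and go to infinity, $\{u_j\}$ is an infinite-dimensional family that cannot have a norm-convergent subsequence, so $\mathrm{min}\,\sigma_{\rm ess}(-\Delta)\le\limsup_j \int|\nabla u_j|^2/\int|u_j|^2 \le (n-1)^2\kappa/4$; since $\sigma_{\rm ess}(-\Delta)\subset[0,\infty)$, this forces $\sigma_{\rm ess}(-\Delta)\cap[0,(n-1)^2\kappa/4]\ne\emptyset$.

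The main obstacle, and the place where care is genuinely needed, is the possibility that the normal exponential map is far from a diffeomorphism — that every normal geodesic from $\partial W$ is cut after finite time, so no single infinite ray is available and the tube $\exp_{\partial W}({\mathcal D}_{\partial W})$ may have bounded $r$ in some directions. One must verify that $r$ is nonetheless unbounded on $M\backslash W$ (true since $M$ is complete and noncompact) and then localize the test functions in a region of $M\backslash(W\cup{\mathcal Cut}(\partial W))$ where $r$ is large; the clean way is to replace ``tube over $\xi_0$'' by ``the sublevel-set annulus $\{a_j\le r\le a_j+L_j\}$ intersected with the domain $\exp_{\partial W}({\mathcal D}_{\partial W})$ of injectivity'', use the distributional inequality $(2)$ (valid on all of $M\backslash W$) to integrate by parts, and absorb the cut-locus contributions into the boundary term, which is favorably signed exactly as the term $(h_1h_1')(\rho\wedge R)\sqrt{g}$ was in $(17)$ because $\partial_t(e^{-\frac{n-1}{2}\sqrt{\kappa}t})<0$. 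Corollary~5.1 then follows from Theorem~5.1 by the same reduction used to deduce Theorem~1.2 from Theorem~1.1: take $W=\{r(\cdot,p_0)<\varepsilon\}$ with $\varepsilon<\mathrm{inj}(p_0)$, noting that for such $W$ one has ${\mathcal Cut}(\partial W)={\mathcal Cut}(p_0)$ outside a compact set and $\mathrm{dist}(\cdot,\partial W)=\mathrm{dist}(\cdot,p_0)-\varepsilon$.
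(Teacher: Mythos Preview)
Your reduction of Corollary~5.1 to Theorem~5.1 by taking $W$ to be a small geodesic ball about $p_0$ is exactly the paper's route and is correct.

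The sketch of Theorem~5.1 itself, however, has a genuine gap at the cut-locus step. You build test functions $u_j(t)=e^{-\frac{n-1}{2}\sqrt\kappa\,t}\eta_j(t)$ supported on annuli $[a_j,a_j+L_j]$ and claim the boundary contribution at a cut time $\rho(\overrightarrow{n}(\xi))\in(a_j,a_j+L_j)$ is favorably signed ``because $\partial_t\bigl(e^{-\frac{n-1}{2}\sqrt\kappa\,t}\bigr)<0$''. But the term that actually appears in the analogue of $(17)$ is $u_j(\rho)\,u_j'(\rho)\,\sqrt g(\rho,\xi)$, and
\[
u_j'(t)=e^{-\frac{n-1}{2}\sqrt\kappa\,t}\Bigl(\eta_j'(t)-\tfrac{n-1}{2}\sqrt\kappa\,\eta_j(t)\Bigr)
\]
is \emph{positive} near $t=a_j$ (there $\eta_j=0$ and $\eta_j'>0$). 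If a set of normal directions of positive measure reaches the cut locus just past $a_j$, that boundary term has the wrong sign and the inequality chain fails. Equivalently, in the distributional formulation the factor $f'(r)$ multiplying $\Delta r$ changes sign, so the one-sided comparison $\Delta r\le (n-1)S_\varphi(r)$ cannot be invoked.

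The modification the paper has in mind avoids this by \emph{not} working on annuli: one repeats Section~2 with $R_{\min}(t)$ replaced by $-\kappa-\varphi(t)/(n-1)$, takes the first Dirichlet eigenfunction $h_1$ on the full ball $B(R)_{M_{\rm model}}$, and transplants $F_R=h_1\circ r$ exactly as in Section~3. Lemma~2.1, whose proof uses only $\lambda_D>0$, still yields $h_1'<0$ on $(0,R]$; hence $(h_1h_1')(\rho\wedge R)\,\sqrt g\le 0$ for every $\xi$, irrespective of where the cut locus lies. A radial test-function computation on the model (where there is no cut locus) gives $\lambda_D\bigl(B(R)_{M_{\rm model}}\bigr)\le\frac{(n-1)^2\kappa}{4}+o(1)$, and the transplanted family $\{F_{R_i}\}$ then spans an infinite-dimensional subspace with Rayleigh quotients accumulating at or below $\frac{(n-1)^2\kappa}{4}$, forcing essential spectrum there. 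Your Weyl-sequence idea is natural, but to make it work across the cut locus you would have to restore monotonicity of the radial profile---which essentially returns you to the paper's $h_1$.
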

Corollary $5.1$ is a generalization of one of Donnelly's theorems \cite{D} which asserts that $\sigma_{\rm ess}( -\Delta ) \cap \left[ 0, (n-1)^2 \kappa /4 \right] \neq \emptyset$ under assumption that ${\rm Ric}_{g} \ge -(n-1)\kappa$ on all of $(M,g)$. 


\vspace{1mm}
\begin{flushleft}
Hironori Kumura\\ 
Department of Mathematics\\ 
Shizuoka University\\ 
Ohya, Shizuoka 422-8529\\ 
Japan\\
E-mail address: smhkumu@ipc.shizuoka.ac.jp
\end{flushleft}

\end{document}